\newtheorem{theorem}{Theorem}
\newtheorem{lemma}{Lemma}
\newtheorem{proposition}{Proposition}
\newtheorem{definition}{Definition}
\newtheorem{corollary}{Corollary}
\newcolumntype{C}[1]{>{\centering\arraybackslash}p{#1}}
\newcolumntype{R}[1]{>{\raggedleft\arraybackslash}p{#1}}
\title{Relating the spectrum of a matrix and a principal submatrix using adjugates and Schur complements}
\author{Mario Thüne}
\date{}
\begin{document}
\maketitle
\begin{abstract}
Let $\mathcal{M}$ be a square matrix over a commutative ring and let $\mathcal{A}$ be a principal submatrix. We give relations between the determinants of $\mathcal{M}$ and $\mathcal{A}$ based on an annihilating polynomial for one of them. The intended application is the size reduction of complex latent root problems, especially the reduction of ordinary eigenvalue problems if a matrix or its principal submatrix have a low degree minimal polynomial. An example is the spectrum of vertex perturbed strongly regular graphs.
\end{abstract}
\section{Introduction}
\emph{Let $\mathbf{M}$ be a complex matrix with principal submatrix $\mathbf{A}$. Given the spectrum of $\mathbf{A}$ (or $\mathbf{M}$), how can that be use to determine the spectrum of $\mathbf{M}$ (of $\mathbf{A}$)?}

Since the eigenvalues of a complex matrix $\mathbf{Z}_0$ are the roots of $\det\left(\lambda\mathbf{I}-\mathbf{Z}_0\right)$ and roots of its annihilating polynomials, we put this problem in a more general setting. Considering a square matrix $\mathcal{Z}$ over a commutative ring $R$, we relate $\det\left(\mathcal{Z}\right)$ to the determinant of a principal super- (or sub-) matrix using a variation on Schur complements, assuming that $f\in R$, two square matrices $\mathcal{X}$ and $\mathcal{Y}$ and a polynomial $\mathrm{\alpha}\left(x\right)\in R\left[x\right]$ are available s.t. $\mathrm{\alpha}\left(f\mathbf{I}-\mathcal{X}\mathcal{Z}\mathcal{Y}\right)=0.$ The problem above arises by $R=\mathbb{C}\left[\lambda\right]$,  $\mathcal{X}=\mathcal{Y}=\mathbf{I}$, $\mathcal{Z}=\lambda\mathbf{I}-\mathcal{Z}_0$ with $\mathcal{Z}_0\in\left\{\mathbf{M},\mathbf{A}\right\}$ and $\mathrm{\alpha}\left(x\right)\in\mathbb{C}\left[x\right]\subset\left(\mathbb{C}\left[\lambda\right]\right)\left[x\right]$ with $\mathrm{\alpha}\left(\mathcal{Z}_0\right)=\mathbf{0}$. 

Let $\mathbf{M}$ be of size $N$, $\mathbf{A}$ be of size $n=N-s$ and let $\mathrm{\alpha}$ have degree $d$. As will be shown, we can basically reduce the eigenproblem of $\mathbf{M}$ (of $\mathbf{A}$) to a monic polynomial eigenproblem of size $s$, which, by moniticity, can be transformed into an ordinary eigenproblem of size $s\left(d+1\right)$ (resp. $s\left(d-1\right)$), i.e. if $s$ and $d$ are small enough a size reduction can be achieved even within the framework of ordinary eigenvalue problems.
\paragraph{Structure of the Article}
Section~\ref{sec_AMSC} collects several elementary results for adjugates and Schur-like complements of matrices over a commutative ring R, which are well known for the case $R=\mathbb{C}$.
Our basic theorems are given in section~\ref{sec_BT}. Section~\ref{sec_ISR} considers our method as a generalization to isospectral graph reductions~\cite{BUNIMOVICH20121429} developed for matrices over meromorphic functions. It provides a simple way of transforming latent pairs of the reduced matrix to those of the initial matrix, which generalizes the corresponding result in~\cite{Duarte2015110}. We also show that in the generalized framework the reduced matrix still provides an improved spectral approximation as considered in~\cite{BUNIMOVICH20121429}.
Section~\ref{sec_AOEP} discusses the special case of ordinary complex eigenvalue problems followed by an example application to vertex perturbations of strongly regular graphs in section~\ref{sec_VPSRG}.
\section{Adjugate-like Matrix and Schur-like Complement}\label{sec_AMSC}
Let $R$ be a commutative ring. In order to simplify some proofs let $R$ be unital. An element $r$ is \emph{regular} if it is not zero and not a zero divisor. A square matrix is \emph{regular} if its determinant is regular. The following proposition is well known.
\begin{proposition}\label{pro_reg}
Let $\mathcal{X},\mathcal{Y}\in R^{n\times n}$, $\mathcal{X}$ regular and $\mathcal{X}\mathcal{Y}=r\mathbf{I}_n$ with $r\in R$. $\mathcal{Y}$ is regular if and only if $r$ is regular and $\mathcal{Y}=\mathbf{0}$ if and only if $r=0$.
\end{proposition}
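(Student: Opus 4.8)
The plan is to reduce everything to the scalar identity obtained by applying the determinant to $\mathcal{X}\mathcal{Y}=r\mathbf{I}_n$, together with the adjugate identity $\operatorname{adj}(\mathcal{X})\,\mathcal{X}=\det(\mathcal{X})\,\mathbf{I}_n$, both of which hold over any commutative unital ring. First I would record the elementary fact that in a commutative ring the regular elements are closed under multiplication: if $a,b$ are each neither zero nor a zero divisor, then $ab\neq 0$ (otherwise $a$ would be a zero divisor), and $abc=0$ forces $bc=0$ and hence $c=0$; in particular $r$ is regular if and only if $r^{n}$ is regular, by an immediate induction. Conversely, if a product $ab$ is regular then both factors are regular, since a zero or zero-divisor factor would make $ab$ zero or a zero divisor.

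For the first equivalence, applying $\det$ to $\mathcal{X}\mathcal{Y}=r\mathbf{I}_n$ gives $\det(\mathcal{X})\det(\mathcal{Y})=r^{n}$, where $\det(\mathcal{X})$ is regular by hypothesis. If $\mathcal{Y}$ is regular, then $\det(\mathcal{Y})$ is regular, so the left-hand side is a product of two regular elements, hence regular; thus $r^{n}$, and therefore $r$, is regular. Conversely, if $r$ is regular then $r^{n}$ is regular, so the product $\det(\mathcal{X})\det(\mathcal{Y})$ equals a regular element, which forces each factor to be regular; in particular $\det(\mathcal{Y})$ is regular, i.e.\ $\mathcal{Y}$ is regular.

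For the second equivalence, one direction is immediate: $\mathcal{Y}=\mathbf{0}$ gives $r\mathbf{I}_n=\mathcal{X}\cdot\mathbf{0}=\mathbf{0}$, so $r=0$ upon reading off a diagonal entry. For the converse, assume $r=0$, so that $\mathcal{X}\mathcal{Y}=\mathbf{0}$; multiplying on the left by $\operatorname{adj}(\mathcal{X})$ yields $\det(\mathcal{X})\,\mathcal{Y}=\mathbf{0}$, that is, $\det(\mathcal{X})\,y_{ij}=0$ for every entry $y_{ij}$ of $\mathcal{Y}$. Since $\det(\mathcal{X})$ is regular it is not a zero divisor, so $y_{ij}=0$ for all $i,j$ and $\mathcal{Y}=\mathbf{0}$.

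I do not expect a genuine obstacle; the only point needing a little care is the bookkeeping with zero divisors — in particular that an identity $\det(\mathcal{X})\det(\mathcal{Y})=r^{n}$ does not by itself control either factor unless one already controls the other, which is exactly where the hypothesis ``$\mathcal{X}$ regular'' enters both directions of the first equivalence, and why the adjugate (rather than an honest inverse of $\mathcal{X}$, which need not exist) is the correct tool for the second.
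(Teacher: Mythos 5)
Your proof is correct. Note that the paper itself offers no proof of Proposition~\ref{pro_reg} (it is stated as ``well known''), so there is no argument of the author's to compare against; your route --- taking determinants of $\mathcal{X}\mathcal{Y}=r\mathbf{I}_n$ to get $\det(\mathcal{X})\det(\mathcal{Y})=r^{n}$, using that in a commutative ring a product is regular exactly when all factors are, and invoking the adjugate identity $\operatorname{adj}(\mathcal{X})\,\mathcal{X}=\det(\mathcal{X})\,\mathbf{I}_n$ to handle $r=0$ --- is the standard one and uses only facts valid over a commutative unital ring, matching the paper's setting. One small remark on your closing commentary: in the direction ``$r$ regular $\Rightarrow\mathcal{Y}$ regular'' your own argument never uses the regularity of $\mathcal{X}$, since $r^{n}$ regular already forces both factors of $\det(\mathcal{X})\det(\mathcal{Y})$ to be regular; so the claim that the hypothesis on $\mathcal{X}$ enters \emph{both} directions of the first equivalence overstates its role (it is genuinely needed only for ``$\mathcal{Y}$ regular $\Rightarrow r$ regular'' and for the $r=0$ case). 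This does not affect the correctness of the proof.
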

\subsection{Adjugate-like Matrices}\label{subsec_AM}
\begin{definition}\label{def_pk}
$\mathrm{p}_k\left(y,z\right)=\sum_{i=1}^{k}y^{i-1}z^{k-i}\in R\left[y,z\right]$.
\end{definition}
The identity $y\hspace{1pt}\mathrm{p}_k\left(y,z\right)-\mathrm{p}_k\left(y,z\right)z=y^k-z^k$ is easy to see. If the indeterminates $y$ and $z$ commute, i.e. $yz=zy$, we have
$\left(y-z\right)\mathrm{p}_k\left(y,z\right)=y^k-z^k$.

Let $\mathrm{a}\left(x\right)=\sum_{k=0}^{d}a_kx^k$ be a polynomial over $R$. We define
\begin{definition}\label{def_p}
$\mathrm{p}\left(y,z;\mathrm{a}\right)=\sum_{k=1}^{d}a_k\mathrm{p}_k\left(y,z\right)$.
\end{definition}
Again assuming commuting $y$ and $z$ we have
\begin{equation}\label{eq_(y-z)p=qy-qz}
\left(y-z\right)\mathrm{p}\left(y,z;\mathrm{a}\right)=\mathrm{a}\left(y\right)-\mathrm{a}\left(z\right),
\end{equation}
known as the \emph{polynomial remainder theorem}. We are interested in the case where $y=f\mathbf{I}_n$ with $f\in R$ and $z=\mathcal{X}\in R^{n\times n}$ (cf.~\cite{G60TheMat}).
\begin{definition}\label{def_adj}
Let $\mathcal{X}\hspace{-1pt}\in\hspace{-1pt}R^{n\times n}$ and $\mathrm{a}\hspace{-1pt}\left(x\right)\hspace{-1pt}=\hspace{-1pt}\sum_{k=0}^{d}a_kx^k\in R\left[x\right]\setminus{0}$ s.t. $\mathrm{a}\hspace{-1pt}\left(\mathcal{X}\right)\hspace{-1pt}=\hspace{-1pt}\mathbf{0}$.
We define the \emph{adjugate-like matrix} of $\mathcal{X}$ corresponding to $\mathrm{a}\left(x\right)$ as
\begin{equation}\label{eq_adj}
\mathrm{p}\left(\lambda\mathbf{I}_n,\mathcal{X};\mathrm{a}\right)
=\textstyle{\text{$\sum_{k=1}^{d}$}}a_k\textstyle{\text{$\sum_{i=1}^{k}$}}\lambda^{i-1}\mathcal{X}^{k-i}.
\end{equation}
\end{definition}
If $\mathrm{a}\left(x\right)$ is an annihilating polynomial of $\mathcal{X}$, then we have, by~\eqref{eq_(y-z)p=qy-qz},
\begin{equation}\label{eq_Adj=EV}
\left(f\mathbf{I}_n-\mathcal{X}\right)\mathrm{p}\left(f\mathbf{I}_n,\mathcal{X};\mathrm{a}\right)=
\mathrm{p}\left(f\mathbf{I}_n,\mathcal{X};\mathrm{a}\right)\left(f\mathbf{I}_n-\mathcal{X}\right)=
\mathrm{a}\left(f\right)\mathbf{I}_n.
\end{equation}
\begin{corollary}\label{cor_adj}
Let $\mathcal{X}\in R^{n\times n}$, $f\in R$ and $\mathrm{a}\left(x\right)\in R\left[x\right]\setminus{0}$ s.t. $\mathrm{a}\left(f\mathbf{I}_n-\mathcal{X}\right)=\mathbf{0}$. Then 
$\mathcal{X}\mathrm{p}\left(f\mathbf{I}_n,f\mathbf{I}_n-\mathcal{X};\mathrm{a}\right)=
\mathrm{p}\left(f\mathbf{I}_n,f\mathbf{I}_n-\mathcal{X};\mathrm{a}\right)\mathcal{X}=
\mathrm{a}\left(f\right)\mathbf{I}_n$. 
\end{corollary}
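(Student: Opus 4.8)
The plan is to obtain the corollary from equation~\eqref{eq_Adj=EV} by the substitution in which $\mathcal{X}$ is replaced by $f\mathbf{I}_n-\mathcal{X}$, exploiting the symmetry of the whole construction in $\mathcal{X}$ and $f\mathbf{I}_n-\mathcal{X}$.

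First I would set $\mathcal{X}':=f\mathbf{I}_n-\mathcal{X}\in R^{n\times n}$ and note that the hypothesis $\mathrm{a}\left(f\mathbf{I}_n-\mathcal{X}\right)=\mathbf{0}$ is exactly the statement that $\mathrm{a}\left(x\right)$ is an annihilating polynomial of $\mathcal{X}'$. Hence all the ingredients that led to~\eqref{eq_Adj=EV} are available for $\mathcal{X}'$: since $f\mathbf{I}_n$ is a scalar matrix it is central in $R^{n\times n}$ and thus commutes with $\mathcal{X}'$, so the polynomial remainder theorem~\eqref{eq_(y-z)p=qy-qz} applies with $y=f\mathbf{I}_n$ and $z=\mathcal{X}'$, and~\eqref{eq_Adj=EV} reads
\[
\left(f\mathbf{I}_n-\mathcal{X}'\right)\mathrm{p}\left(f\mathbf{I}_n,\mathcal{X}';\mathrm{a}\right)=\mathrm{p}\left(f\mathbf{I}_n,\mathcal{X}';\mathrm{a}\right)\left(f\mathbf{I}_n-\mathcal{X}'\right)=\mathrm{a}\left(f\right)\mathbf{I}_n.
\]

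Then I would simply rewrite this in terms of $\mathcal{X}$: on the one hand $f\mathbf{I}_n-\mathcal{X}'=f\mathbf{I}_n-\left(f\mathbf{I}_n-\mathcal{X}\right)=\mathcal{X}$, and on the other hand $\mathrm{p}\left(f\mathbf{I}_n,\mathcal{X}';\mathrm{a}\right)=\mathrm{p}\left(f\mathbf{I}_n,f\mathbf{I}_n-\mathcal{X};\mathrm{a}\right)$ by Definition~\ref{def_p} and~\eqref{eq_adj}. Substituting both into the displayed identity yields $\mathcal{X}\,\mathrm{p}\left(f\mathbf{I}_n,f\mathbf{I}_n-\mathcal{X};\mathrm{a}\right)=\mathrm{p}\left(f\mathbf{I}_n,f\mathbf{I}_n-\mathcal{X};\mathrm{a}\right)\mathcal{X}=\mathrm{a}\left(f\right)\mathbf{I}_n$, which is the claim. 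I do not expect a genuine obstacle here; the only point worth checking is the commutativity needed to invoke~\eqref{eq_(y-z)p=qy-qz} (equivalently~\eqref{eq_Adj=EV}) for $\mathcal{X}'$, and that is automatic because $f\mathbf{I}_n$ is scalar. In effect the corollary merely records that the roles of $\mathcal{X}$ and $f\mathbf{I}_n-\mathcal{X}$ in~\eqref{eq_Adj=EV} may be interchanged.
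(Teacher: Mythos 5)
Your proof is correct and is exactly the argument the paper intends: the corollary is stated as an immediate consequence of~\eqref{eq_Adj=EV}, obtained by applying it to $f\mathbf{I}_n-\mathcal{X}$ (which $\mathrm{a}$ annihilates by hypothesis) and observing that $f\mathbf{I}_n-\left(f\mathbf{I}_n-\mathcal{X}\right)=\mathcal{X}$. The paper gives no separate proof, so your substitution argument, including the remark that commutativity is automatic because $f\mathbf{I}_n$ is scalar, fills in precisely the intended step.
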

With $\mathrm{\Delta}\left(x\right)\hspace{-1pt}=\hspace{-1pt}\det\left(x\mathbf{I}_n-\mathcal{X}\right)$ being the characteristic polynomial of $\mathcal{X}$, $B\left(\lambda\right)\hspace{-1pt}=\hspace{-1pt}\mathrm{p}\left(\lambda,\mathcal{X};\mathrm{\Delta}\right)$ is called the adjoint matrix of $\mathcal{X}$ in~\cite[p.~82ff]{G60TheMat}. For the classical adjugate (also called adjoint)~\cite{R05CA}, $\mathcal{X}^{\mathrm{adj}}\hspace{-1pt}=\hspace{-1pt}\left(-1\right)^{n-1}\mathrm{p}\left(0,\mathcal{X};\mathrm{\Delta}\right)$, we have the well known relation $\mathcal{X}\mathcal{X}^{\mathrm{adj}}\hspace{-1pt}=\hspace{-1pt}\mathcal{X}^{\mathrm{adj}}\mathcal{X}\hspace{-1pt}=\hspace{-1pt}\det\left(\mathcal{X}\right)\mathbf{I}_n$, which follows from~\eqref{eq_Adj=EV}.
\subsection{Schur-like Complements and Opponents}\label{subsec_SC}
\begin{definition}\label{def_Schur}
Let $\mathcal{M}\hspace{-1pt}=\hspace{-1pt}\bigl(\hspace{-1pt}\begin{smallmatrix}\mathcal{A}&\mathcal{B}\\\mathcal{C}&\mathcal{D}\end{smallmatrix}\hspace{-1pt}\bigr)\hspace{-1pt}\in\hspace{-1pt}R^{\left(n+s\right)\times\left(n+s\right)}$, $\mathcal{A}\hspace{-1pt}\in\hspace{-1pt}R^{n\times n}$, $\mathcal{B}\hspace{-1pt}\in\hspace{-1pt}R^{n\times s}$, $\mathcal{C}\hspace{-1pt}\in\hspace{-1pt}R^{s\times n}$, and $\mathcal{D}\hspace{-1pt}\in\hspace{-1pt}R^{s\times s}$. Let $\mathcal{P}\hspace{-1pt}\in\hspace{-1pt}R^{n\times n}$ and $\mathcal{R}\hspace{-1pt}=\hspace{-1pt}\bigl(\begin{smallmatrix}\mathcal{R}_{A}&\mathcal{R}_{B}\\\mathcal{R}_{C}&\mathcal{R}_{D}\end{smallmatrix}\bigr)\hspace{-1pt}\in\hspace{-1pt}R^{\left(n+s\right)\times\left(n+s\right)}$ be not all zero and obey $\mathcal{P}\mathcal{A}\hspace{-1pt}=\hspace{-1pt}\mathcal{A}\mathcal{P}=a\mathbf{I}_n$ with $a\in R$, and $\mathcal{R}\mathcal{M}\hspace{-1pt}=\hspace{-1pt}m\mathbf{I}_{n+s}$ with $m\in R$. 
Set
\begin{equation}
\mathcal{L}=\left(\begin{array}{cc}\mathbf{I}_n&\mathbf{0}\\-\mathcal{C}\mathcal{P}&a\mathbf{I}_s\end{array}\right),\ \ 
\mathcal{U}=\left(\begin{array}{cc}\mathbf{I}_n&-\mathcal{P}\mathcal{B}\\\mathbf{0}&a\mathbf{I}_s\end{array}\right),
\ \text{and}\ \ \ 
\mathcal{S}=a\mathcal{D}-\mathcal{C}\mathcal{P}\mathcal{B}\in R^{s\times s}.
\end{equation}
$\mathcal{L}$ ( $\mathcal{U}$) is called a \emph{left} (\emph{right}) \emph{Schur-like multiplier},  $\mathcal{S}$ a \emph{Schur-like complement} of $\mathcal{A}$ in $\mathcal{M}$, and $\mathcal{R}_{D}\in R^{s\times s}$ an \emph{opponent} of $\mathcal{A}$ in $\mathcal{M}$.
\end{definition}
How Schur-like multipliers induce Schur-like complements is summarized in the following relations, which are easily verified and well known for $R=\mathbb{C}$ and $a=1$.
\begin{equation}\label{eq_LM_MU_LMU}
(i)\ \mathcal{L}\mathcal{M}\hspace{-2pt}=\hspace{-2pt}
\left(\begin{array}{cc}\mathcal{A}&\mathcal{B}\\\mathbf{0}&\mathcal{S}\end{array}\right),\ 
(ii)\ \mathcal{M}\mathcal{U}\hspace{-2pt}=\hspace{-2pt}
\left(\begin{array}{cc}\mathcal{A}&\mathbf{0}\\\mathcal{C}&\mathcal{S}\end{array}\right),\ 
(iii)\ \mathcal{L}\mathcal{M}\mathcal{U}\hspace{-2pt}=\hspace{-2pt}
\left(\begin{array}{cc}\mathcal{A}&\mathbf{0}\\\mathbf{0}&a\mathcal{S}\end{array}\right)
\end{equation}
Complements and opponents are related through
\begin{proposition}
In the notation of definition \ref{def_Schur}, 
$\mathcal{R}_{D}\mathcal{S}\hspace{-2pt}=\hspace{-2pt}am\mathbf{I}_s$ and
\begin{equation}\label{eq_a2RM}
a^2\mathcal{R}\mathcal{M}\hspace{-2pt}=\hspace{-2pt}\left(\begin{array}{cc}
am\mathcal{P}+\mathcal{P}\mathcal{B}\mathcal{R}_{\hspace{-2pt}D}\mathcal{C}\mathcal{P}&-a\mathcal{P}\mathcal{B}\mathcal{R}_{\hspace{-2pt}D}\\
-a\mathcal{R}_{\hspace{-2pt}D}\mathcal{C}\mathcal{P}&a^2\mathcal{R}_{\hspace{-2pt}D}
\end{array}\right)\mathcal{M}.
\end{equation}
\end{proposition}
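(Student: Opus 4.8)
The plan is to reduce everything to block-matrix bookkeeping: first prove $\mathcal{R}_D\mathcal{S}=am\mathbf{I}_s$, then use it to verify \eqref{eq_a2RM}. For the first identity I would expand the hypothesis $\mathcal{R}\mathcal{M}=m\mathbf{I}_{n+s}$ into its four block equations; only the bottom two are needed, namely $\mathcal{R}_C\mathcal{A}+\mathcal{R}_D\mathcal{C}=\mathbf{0}$ and $\mathcal{R}_C\mathcal{B}+\mathcal{R}_D\mathcal{D}=m\mathbf{I}_s$. Right-multiplying the first by $\mathcal{P}$ and using $\mathcal{A}\mathcal{P}=a\mathbf{I}_n$ gives $\mathcal{R}_D\mathcal{C}\mathcal{P}=-a\mathcal{R}_C$; substituting this into $\mathcal{R}_D\mathcal{S}=\mathcal{R}_D(a\mathcal{D}-\mathcal{C}\mathcal{P}\mathcal{B})=a\mathcal{R}_D\mathcal{D}-(\mathcal{R}_D\mathcal{C}\mathcal{P})\mathcal{B}=a\mathcal{R}_D\mathcal{D}+a\mathcal{R}_C\mathcal{B}=a(\mathcal{R}_C\mathcal{B}+\mathcal{R}_D\mathcal{D})$ and then applying the second block equation yields $\mathcal{R}_D\mathcal{S}=am\mathbf{I}_s$.

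For \eqref{eq_a2RM} I would first observe that the left-hand side is simply $a^2\mathcal{R}\mathcal{M}=a^2m\mathbf{I}_{n+s}$, so it is enough to show that the $2\times 2$ block matrix $\mathcal{N}$ appearing on the right --- with blocks $am\mathcal{P}+\mathcal{P}\mathcal{B}\mathcal{R}_D\mathcal{C}\mathcal{P}$, $-a\mathcal{P}\mathcal{B}\mathcal{R}_D$, $-a\mathcal{R}_D\mathcal{C}\mathcal{P}$, $a^2\mathcal{R}_D$ --- satisfies $\mathcal{N}\mathcal{M}=a^2m\mathbf{I}_{n+s}$. I would compute $\mathcal{N}\mathcal{M}$ blockwise: the $(1,1)$- and $(2,1)$-blocks collapse to $a^2m\mathbf{I}_n$ and $\mathbf{0}$ using only $\mathcal{P}\mathcal{A}=a\mathbf{I}_n$, while the $(1,2)$-block simplifies to $\mathcal{P}\mathcal{B}\bigl(am\mathbf{I}_s-\mathcal{R}_D\mathcal{S}\bigr)$ and the $(2,2)$-block to $a\mathcal{R}_D\mathcal{S}$, both of which are settled by the identity $\mathcal{R}_D\mathcal{S}=am\mathbf{I}_s$ from the first step. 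Comparing the result with $a^2\mathcal{R}\mathcal{M}$ then gives \eqref{eq_a2RM}.

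There is no serious obstacle; the whole argument is elementary block algebra. The one point that needs attention is non-commutativity --- only $\mathcal{P}$ is assumed to commute with $\mathcal{A}$, whereas $a$ and $m$ are central --- so factors must be kept in order and, for instance, $\mathcal{R}_D\mathcal{S}$ cannot be replaced by $\mathcal{S}\mathcal{R}_D$. It is also worth stressing that \eqref{eq_a2RM} does not claim $a^2\mathcal{R}=\mathcal{N}$, only that the two agree after right-multiplication by $\mathcal{M}$: over a ring with zero divisors they may differ, though they coincide whenever $\mathcal{M}$ is regular, and the shape of $\mathcal{N}$ is the expected $a$-scaled analogue of the classical block-inverse formula assembled from the Schur-like multipliers $\mathcal{L}$ and $\mathcal{U}$ of \eqref{eq_LM_MU_LMU}.
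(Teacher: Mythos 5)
Your proof is correct and takes essentially the same route as the paper: the paper gets $\mathcal{R}_D\mathcal{S}=am\mathbf{I}_s$ by comparing the lower-right blocks of $(\mathcal{R}\mathcal{M})\mathcal{U}=m\mathcal{U}$ with $\mathcal{R}(\mathcal{M}\mathcal{U})$, which is the same block bookkeeping you perform directly from $\mathcal{R}\mathcal{M}=m\mathbf{I}_{n+s}$ and $\mathcal{A}\mathcal{P}=a\mathbf{I}_n$. For \eqref{eq_a2RM} the paper likewise just says the identity is ``directly verified'' from the first claim and the definition of $\mathcal{S}$, and your blockwise check that both sides equal $a^2m\mathbf{I}_{n+s}$ is exactly that verification made explicit.
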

\begin{proof}The first claim follows from the lower right corner of
\begin{equation*}
\left(\begin{array}{cc}m\mathbf{I}_n&-m\mathcal{P}\mathcal{B}\\\mathbf{0}&ma\mathbf{I}_s\end{array}\right)=
\mathcal{R}\mathcal{M}\mathcal{U}
=\mathcal{R}\left(\begin{array}{cc}\mathcal{A}&\mathbf{0}\\\mathcal{C}&\mathcal{S}\end{array}\right)
=\left(\begin{array}{cc}m\mathbf{I}_n&\mathcal{R}_{B}\mathcal{S}\\\mathbf{0}&\mathcal{R}_{D}\mathcal{S}\end{array}\right).
\end{equation*}
Using it and the definition of $\mathcal{S}$, the second claim can be directly verified.
\end{proof}
Sections~3 to~6 exploit the following lemma.
\begin{lemma}\label{lem_Schur}
In the notation of definition \ref{def_Schur},
\begin{align}
a^s\det\left(\mathcal{M}\right)&=\det\left(\mathcal{A}\right)\det\left(\mathcal{S}\right)\label{lem_Schur1}\\
\det\left(\mathcal{R}_{D}\right)\det\left(\mathcal{M}\right)&=\det\left(\mathcal{A}\right)m^s.\label{lem_Schur2}
\end{align}
\end{lemma}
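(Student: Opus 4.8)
The plan is to read off both identities from two facts valid over any unital commutative ring: the determinant is multiplicative, and the determinant of a block‑triangular matrix is the product of the determinants of its diagonal blocks. One then only has to feed in factorizations of $\mathcal{M}$ that are already recorded in \eqref{eq_LM_MU_LMU} and in the hypothesis $\mathcal{R}\mathcal{M}=m\mathbf{I}_{n+s}$; no inversion, regularity, or cancellation is needed anywhere.

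For \eqref{lem_Schur1} I would start from the first relation in \eqref{eq_LM_MU_LMU}, $\mathcal{L}\mathcal{M}=\bigl(\begin{smallmatrix}\mathcal{A}&\mathcal{B}\\\mathbf{0}&\mathcal{S}\end{smallmatrix}\bigr)$. Here $\mathcal{L}$ is block lower‑triangular with diagonal blocks $\mathbf{I}_n$ and $a\mathbf{I}_s$, hence $\det(\mathcal{L})=a^s$, while the block upper‑triangular shape of $\mathcal{L}\mathcal{M}$ gives $\det(\mathcal{L}\mathcal{M})=\det(\mathcal{A})\det(\mathcal{S})$. Multiplicativity then yields $a^s\det(\mathcal{M})=\det(\mathcal{L})\det(\mathcal{M})=\det(\mathcal{L}\mathcal{M})=\det(\mathcal{A})\det(\mathcal{S})$. (The relation $\mathcal{M}\mathcal{U}=\bigl(\begin{smallmatrix}\mathcal{A}&\mathbf{0}\\\mathcal{C}&\mathcal{S}\end{smallmatrix}\bigr)$ from \eqref{eq_LM_MU_LMU} together with $\det(\mathcal{U})=a^s$ works identically; note that relation $(iii)$ in \eqref{eq_LM_MU_LMU} is \emph{not} convenient here, since $\mathcal{L}\mathcal{M}\mathcal{U}$ would force one to cancel a factor $a^s$.)

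For \eqref{lem_Schur2} I would deliberately avoid routing through the identity $\mathcal{R}_D\mathcal{S}=am\mathbf{I}_s$ of the preceding proposition: combined with \eqref{lem_Schur1} it yields only $a^s\bigl(\det(\mathcal{R}_D)\det(\mathcal{M})-\det(\mathcal{A})m^s\bigr)=0$, and $a$ need not be regular. Instead I would use only the lower block‑row of $\mathcal{R}\mathcal{M}=m\mathbf{I}_{n+s}$, namely $\mathcal{R}_C\mathcal{A}+\mathcal{R}_D\mathcal{C}=\mathbf{0}$ and $\mathcal{R}_C\mathcal{B}+\mathcal{R}_D\mathcal{D}=m\mathbf{I}_s$. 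These two equations say exactly that
\begin{equation*}
\left(\begin{array}{cc}\mathbf{I}_n&\mathbf{0}\\\mathcal{R}_C&\mathcal{R}_D\end{array}\right)\mathcal{M}=\left(\begin{array}{cc}\mathcal{A}&\mathcal{B}\\\mathbf{0}&m\mathbf{I}_s\end{array}\right),
\end{equation*}
and taking determinants of both sides — the left factor is block lower‑triangular with diagonal blocks $\mathbf{I}_n,\mathcal{R}_D$, the right‑hand side is block upper‑triangular with diagonal blocks $\mathcal{A},m\mathbf{I}_s$ — gives $\det(\mathcal{R}_D)\det(\mathcal{M})=\det(\mathcal{A})m^s$ directly.

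I do not expect a genuine obstacle: both arguments are a couple of lines. The one point that must be handled with care is making sure every determinant fact invoked (multiplicativity, the block‑triangular product formula, $\det(a\mathbf{I}_s)=a^s$, $\det(m\mathbf{I}_s)=m^s$) is used only in a form that holds over an arbitrary commutative ring, so that no invertibility, no cancellation, and not even the "not all zero'' condition on $\mathcal{R}$ creeps in through the back door. In particular, the proof of \eqref{lem_Schur2} makes no use of $\mathcal{P}$ or $a$ at all.
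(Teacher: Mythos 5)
Your proposal is correct and follows essentially the same route as the paper: \eqref{lem_Schur1} is obtained from $\mathcal{L}\mathcal{M}$ in \eqref{eq_LM_MU_LMU} via multiplicativity and block triangularity, and \eqref{lem_Schur2} from the lower block row of $\mathcal{R}\mathcal{M}=m\mathbf{I}_{n+s}$, i.e.\ the identity $\bigl(\begin{smallmatrix}\mathbf{I}_n&\mathbf{0}\\\mathcal{R}_C&\mathcal{R}_D\end{smallmatrix}\bigr)\mathcal{M}=\bigl(\begin{smallmatrix}\mathcal{A}&\mathcal{B}\\\mathbf{0}&m\mathbf{I}_s\end{smallmatrix}\bigr)$, exactly as in the paper's proof. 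Your added care about avoiding any cancellation by $a$ is consistent with, though not spelled out in, the original argument.
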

\begin{proof}
\eqref{lem_Schur1} follows from \eqref{eq_LM_MU_LMU} yielding, e.g., $\det\left(\mathcal{L}\right)\det\left(\mathcal{M}\right)=\det\left(\mathcal{A}\right)\det\left(\mathcal{S}\right)$.
\newline
Assertion \eqref{lem_Schur2} follows from $\mathcal{R}\mathcal{M}=m\mathbf{I}_{n+s}$ implying
\begin{equation*}\det\left(\begin{array}{cc}\mathbf{I}_n&\mathbf{0}\\\mathcal{R}_{C}&\mathcal{R}_{D}\end{array}\right)
\det\left(\begin{array}{cc}\mathcal{A}&\mathcal{B}\\\mathcal{C}&\mathcal{D}\end{array}\right)
=\det\left(\begin{array}{cc}\mathcal{A}&\mathcal{B}\\\mathbf{0}&m\mathbf{I}_s\end{array}\right).\qedhere
\end{equation*}
\end{proof}
\subsection{Quotient Property}\label{subsec_QM}
In order to understand the structure of a sequential application of Schur-like multipliers, we consider a $3\times3$ block matrix over $R$.
Let 
\begin{equation}
\mathcal{M}'=\left(\begin{array}{ccc}
\mathcal{A}&\mathcal{B}&\mathcal{B}'\\
\mathcal{C}&\mathcal{D}&\mathcal{B}''\\
\mathcal{C}'&\mathcal{C}''&\mathcal{D}'
\end{array}\right)\in R^{\left(n+s+t\right)\times\left(n+s+t\right)},
\end{equation}
and let 
$\mathcal{P}\in R^{n\times n}$, $a\in R$, $\mathcal{S}\in R^{s\times s}$, $\mathcal{Q}\in R^{s\times s}$, $q\in R$ and $\mathcal{F}\in R^{s\times t}$ s.t.
\begin{equation}
\mathcal{P}\mathcal{A}=a\mathbf{I}_{n},\quad\mathcal{S}=a\mathcal{D}-\mathcal{C}\mathcal{P}\mathcal{B},\quad
\mathcal{Q}\mathcal{S}=q\mathbf{I}_{s}\quad\text{and}\quad\mathcal{F}=a\mathcal{C}''-\mathcal{C}'\mathcal{P}\mathcal{B}.
\end{equation}
If $\mathcal{R}
\in R^{\left(n+s\right)\times\left(n+s\right)}$ and $m\in R$ are set as
\begin{equation}
\mathcal{R}=\left(\begin{array}{cc}
q\mathcal{P}+\mathcal{P}\mathcal{B}\mathcal{Q}\mathcal{C}\mathcal{P}&-a\mathcal{P}\mathcal{B}\mathcal{Q}\\
-a\mathcal{Q}\mathcal{C}\mathcal{P}&a^2\mathcal{Q}
\end{array}\right)
\qquad\text{and}\qquad m=aq,
\end{equation}
then one shows directly that
$\mathcal{R}\left(\begin{smallmatrix}
\mathcal{A}&\mathcal{B}\\
\mathcal{C}&\mathcal{D}
\end{smallmatrix}\right)=m\mathbf{I}_{n+s}$
and, with suitable $\mathcal{E}$ and $\mathcal{S}'$,
\begin{align}
\left(\begin{array}{c|cc}
\mathcal{A}&\mathcal{B}&\mathcal{B}'\\\hline
\mathbf{0}&\mathcal{S}&\mathcal{E}\\
\mathbf{0}&\mathbf{0}&\mathcal{S}'
\end{array}\right)
&=
\left(\begin{array}{c|cc}
\mathbf{I}_{n}&\mathbf{0}&\mathbf{0}\\\hline
\mathbf{0}&\mathbf{I}_{s}&\mathbf{0}\\
\mathbf{0}&-\mathcal{F}\mathcal{Q}&q\mathbf{I}_{t}
\end{array}\right)\hspace{-1pt}
\left(\begin{array}{c|cc}
\mathbf{I}_{n}&\mathbf{0}\\\hline
-\left(\begin{array}{c}\mathcal{C}\\\mathcal{C}'\end{array}\right)\mathcal{P}
&a\mathbf{I}_{s+t}
\end{array}\right)\mathcal{M}'\label{eq_QP1}\\
\left(\begin{array}{cc|c}
\mathcal{A}&\mathcal{B}&\mathcal{B}'\\
\mathbf{0}&\mathcal{S}&\mathcal{E}\\\hline
\mathbf{0}&\mathbf{0}&\mathcal{S}'
\end{array}\right)
&=\left(\begin{array}{cc|c}
\mathbf{I}_{n}&\mathbf{0}&\mathbf{0}\\
-\mathcal{C}\mathcal{P}&a\mathbf{I}_{s}&\mathbf{0}\\\hline
\mathbf{0}&\mathbf{0}&\mathbf{I}_{t}
\end{array}\right)
\left(\begin{array}{c|c}
\mathbf{I}_{n+s}&\mathbf{0}\\\hline
-\left(\begin{array}{cc}\mathcal{C}'&\mathcal{C}''\end{array}\right)\mathcal{R}&m\mathbf{I}_{t}
\end{array}\hspace{-1pt}\right)\hspace{-1pt}\mathcal{M}'.\label{eq_QP2}
\end{align}
If only $\mathcal{R}\hspace{-2pt}=\hspace{-2pt}\left(\begin{smallmatrix}
\mathcal{R}_A&\mathcal{R}_B\\
\mathcal{R}_C&\mathcal{R}_D
\end{smallmatrix}\right)\hspace{-2pt}\in\hspace{-2pt}R^{\left(n+s\right)\times\left(n+s\right)}$ with $\mathcal{R}\left(\begin{smallmatrix}
\mathcal{A}&\mathcal{B}\\
\mathcal{C}&\mathcal{D}
\end{smallmatrix}\right)\hspace{-2pt}=\hspace{-2pt}m\mathbf{I}$, from~\eqref{eq_QP2} follows
\begin{equation}\label{eq_QP3}
\hspace{-5pt}\left(\hspace{-1pt}\begin{array}{c|cc}
\mathcal{A}&\mathcal{B}&\mathcal{B}'\\\hline
\mathbf{0}&\mathcal{S}&\mathcal{E}\\
\mathbf{0}&\mathbf{0}&a^2\mathcal{S}'
\end{array}\hspace{-1pt}\right)
\hspace{-3pt}=\hspace{-3pt}
\left(\hspace{-1pt}\begin{array}{c|cc}
\mathbf{I}_{n}&\mathbf{0}&\mathbf{0}\\\hline
\mathbf{0}&\mathbf{I}_{s}&\mathbf{0}\\
\mathbf{0}&-\mathcal{F}\mathcal{R}_D&\hspace{-2pt}am\mathbf{I}_{t}
\end{array}\hspace{-1pt}\right)
\hspace{-6pt}
\left(\hspace{-1pt}\begin{array}{c|cc}
\mathbf{I}_{n}&\mathbf{0}\\\hline
-\left(\begin{array}{c}\mathcal{C}\\\mathcal{C}'\end{array}\right)\mathcal{P}
&a\mathbf{I}_{s+t}
\end{array}\hspace{-1pt}\right)
\hspace{-3pt}
\mathcal{M}'
\end{equation}
using~\eqref{eq_a2RM}. 
For $R=\mathbb{C}$ and $a=q=m=1$ those relations imply the famous quotient property of Schur complements~\cite{Crabtree69, Ostrowski71}. From \eqref{eq_QP2} and \eqref{eq_QP3} follows
\begin{proposition}\label{pro_QP}
Let $\mathcal{A}_{k}$ be the $k$-th leading principal submatrix of $\mathcal{M}\in R^{N\times N}$ and let $\mathcal{S}_{\tilde{n}}$ be a Schur-like complement of $\mathcal{A}_{\tilde{n}}$ in $\mathcal{M}$. Set $\mathcal{S}^{(0)}=\mathcal{M}$ and let $\mathcal{S}^{(k+1)}$ be a Schur-like complement of $\mathcal{S}^{(k)}_{11}$ in $\mathcal{S}^{(k)}$. If $\mathcal{A}_{k}$ is regular for all $k\leq\tilde{n}$, then there is a regular $\varsigma\in R$ s.t. $\mathcal{S}^{\tilde{n}}=\varsigma\mathcal{S}_{\tilde{n}}$. 
\end{proposition}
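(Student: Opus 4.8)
The plan is to argue by induction on $\tilde n$, the inductive step being an instance of the $3\times3$ quotient analysis \eqref{eq_QP1}--\eqref{eq_QP3} with block sizes $n=k$, $s=1$, $t=N-k-1$; there the top-left $(n+s)\times(n+s)=(k+1)\times(k+1)$ block is $\mathcal{A}_{k+1}$, and the ``middle'' block, of size $s=1$, is precisely the $1\times1$ pivot that the $(k+1)$-st iteration step eliminates. I would carry the induction hypothesis: $\mathcal{S}^{(k)}=\varsigma_k\mathcal{T}_k$ for some regular $\varsigma_k\in R$ and some Schur-like complement $\mathcal{T}_k$ of $\mathcal{A}_k$ in $\mathcal{M}$. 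The base case $k=0$ is immediate, with $\mathcal{S}^{(0)}=\mathcal{M}=\mathcal{T}_0$ (the Schur-like complement of the empty $0\times0$ block) and $\varsigma_0=1$.

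For the step $k\to k+1$ I would first observe that the $(1,1)$-entry $(\mathcal{T}_k)_{11}$ of $\mathcal{T}_k$ is, directly from the formula $\mathcal{S}=a\mathcal{D}-\mathcal{C}\mathcal{P}\mathcal{B}$ of Definition~\ref{def_Schur}, a Schur-like complement of $\mathcal{A}_k$ in $\mathcal{A}_{k+1}$; hence \eqref{lem_Schur1} (with $s=1$) gives $a\det(\mathcal{A}_{k+1})=\det(\mathcal{A}_k)(\mathcal{T}_k)_{11}$, where $a$ is the scalar carried by $\mathcal{T}_k$, and Proposition~\ref{pro_reg} together with the regularity of $\det(\mathcal{A}_k)$, $\det(\mathcal{A}_{k+1})$ and $a$ forces $(\mathcal{T}_k)_{11}$, hence also $\mathcal{S}^{(k)}_{11}=\varsigma_k(\mathcal{T}_k)_{11}$, to be regular. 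Now $\mathcal{S}^{(k+1)}$ is a Schur-like complement of $\mathcal{S}^{(k)}_{11}=\varsigma_k(\mathcal{T}_k)_{11}$ in $\varsigma_k\mathcal{T}_k$; since $\varsigma_k$ is regular and enters the Schur formula only through regular factors, $\mathcal{S}^{(k+1)}$ is a regular scalar multiple of a Schur-like complement of $(\mathcal{T}_k)_{11}$ in $\mathcal{T}_k$. Finally, feeding the block data above into \eqref{eq_QP2} identifies the latter (for the induced opponent) with a Schur-like complement of $\mathcal{A}_{k+1}$ in $\mathcal{M}$, and \eqref{eq_QP3} shows that any other admissible opponent changes the outcome only by the explicit regular factor $a^2$. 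This gives $\mathcal{S}^{(k+1)}=\varsigma_{k+1}\mathcal{T}_{k+1}$ with $\varsigma_{k+1}$ regular, closing the induction.

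Running this to $k=\tilde n$ leaves $\mathcal{S}^{\tilde n}=\varsigma_{\tilde n}\mathcal{T}_{\tilde n}$ with $\mathcal{T}_{\tilde n}$ a Schur-like complement of $\mathcal{A}_{\tilde n}$ in $\mathcal{M}$, so it only remains to compare $\mathcal{T}_{\tilde n}$ with the given $\mathcal{S}_{\tilde n}$: if $\mathcal{P}\mathcal{A}_{\tilde n}=a\mathbf{I}$ and $\mathcal{P}'\mathcal{A}_{\tilde n}=a'\mathbf{I}$ are the two multiplier data, then multiplying by $\mathcal{A}_{\tilde n}^{\mathrm{adj}}$ and cancelling the regular element $\det(\mathcal{A}_{\tilde n})$ (Proposition~\ref{pro_reg}) yields $a'\mathcal{P}=a\mathcal{P}'$, whence $a'\mathcal{S}=a\mathcal{S}'$ for the two complements --- so any two Schur-like complements of the regular block $\mathcal{A}_{\tilde n}$ in $\mathcal{M}$ differ by a regular scalar, supplying the last factor. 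The resulting $\varsigma$ is a product of the scalars accumulated along the way ($a_k$, $q_k$, $m_k$, the $\varsigma_k$, and the comparison factors), each regular by the above, and a product of regular elements is regular. The step I expect to be the main obstacle is exactly this regularity bookkeeping: propagating ``regular'' through the sequence of $1\times1$ pivots $\mathcal{S}^{(k)}_{11}$ is where the full hypothesis --- $\mathcal{A}_k$ regular for \emph{every} $k\le\tilde n$, not merely $\mathcal{A}_{\tilde n}$ --- must be used, and one has to take care to instantiate \eqref{eq_QP2}/\eqref{eq_QP3} with multiplier and opponent data matching those chosen to define the $\mathcal{S}^{(k+1)}$; by contrast, the purely algebraic identities are immediate once the block decomposition is fixed.
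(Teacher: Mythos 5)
Your argument is essentially the paper's own proof: an induction over one-row pivots that invokes \eqref{eq_QP2} and \eqref{eq_QP3} with block sizes $(n,s,t)=(k,1,N-k-1)$, with regularity of the leading principal submatrices $\mathcal{A}_k$ controlling the regular scalars accumulated along the way. You are merely more explicit about two points the paper leaves implicit --- regularity of the $1\times1$ pivots via \eqref{lem_Schur1} and Proposition~\ref{pro_reg}, and the comparison of two Schur-like complements of the same regular block --- so the route is the same.
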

\begin{proof}
The case $\tilde{n}\hspace{-1pt}=\hspace{-1pt}1$ is obvious. Let $\varsigma'\in R$ be regular s.t. $\mathcal{S}^{(\tilde{n}-1)}\hspace{-1pt}=\hspace{-1pt}\varsigma'\mathcal{S}_{\tilde{n}-1}$.
Since $\mathcal{A}_{\tilde{n}-1}$ is regular, there is regular $p$ s.t. $p^2\mathcal{S}_{\tilde{n}}$ is a Schur-like complement of $\left(\mathcal{S}_{\tilde{n}-1}\right)_{11}$ in $\mathcal{S}_{\tilde{n}-1}$ according to ~\eqref{eq_QP2} and \eqref{eq_QP3} with $\left(n,s,t\right)\hspace{-1pt}=\hspace{-1pt}\left(\tilde{n}-1,1,N-\tilde{n}\right)$. Hence, $\mathcal{S}^{\tilde{n}}=p^2\varsigma'\mathcal{S}_{\tilde{n}}$ with $p^2\varsigma'$ regular. Proposition~\ref{pro_QP} follows by induction.
\end{proof}
\subsection{Strict (Double) Diagonal Dominance}\label{subsec_SDD}
Let $R$ be equipped with a multiplicative norm, i.e. an absolute value, which is a function 
$\lvert\ \rvert:R\rightarrow\mathbb{R}_{\geq0}\quad\text{s.t.}$
$$(i)\ \lvert r\rvert=0\Leftrightarrow r=0,\quad
(ii)\ \lvert r+r'\rvert\leq\lvert r\rvert+\lvert r'\rvert,\quad
(iii)\ \lvert rr'\rvert=\lvert r\rvert\lvert r'\rvert.$$
It is well known that condition (iii) ensures that $\lvert-r\rvert=\lvert r\rvert$ and, together with (i), forces $R$ to have no zero divisor.
\begin{definition}\label{def_sd}
$\mathcal{X}\in R^{N\times N}$ is called \emph{strictly diagonally dominant (sd)} if
\begin{equation}
\forall\,i\in\left\{1,\ldots,N\right\}\ \lvert\mathcal{X}_{ii}\rvert>\textstyle{\text{$\sum_{k\ne i}$}}\lvert\mathcal{X}_{ik}\rvert,
\end{equation}
and $\mathcal{X}$ is called \emph{strictly doubly diagonally dominant (sdd)} if
\begin{equation}
\forall\,i,j\in\left\{1,\ldots,N\right\}\ \lvert\mathcal{X}_{ii}\mathcal{X}_{jj}\rvert>\bigl(\textstyle{\text{$\sum_{k\ne i}$}}\lvert\mathcal{X}_{ik}\rvert\bigr)\bigl(\textstyle{\text{$\sum_{l\ne j}$}}\lvert\mathcal{X}_{jl}\rvert\bigr).
\end{equation}
\end{definition}
The following properties of sd and sdd matrices are easy to see.
\begin{proposition}\label{pro_sdd}
\begin{itemize}
\item[(i)] Every sd matrix of size $N\geq2$ is sdd.
\item[(ii)] 
If $\mathcal{X}$ is sd (sdd), then every principal submatrix of size $n\geq2$ is sd (sdd) and the diagonal entries are regular.
\item[(iii)]
If $\mathcal{X}$ is sdd but not sd, then there is exactly on index $i$ s.t.\newline
$\lvert\mathcal{X}_{ii}\rvert\leq\sum_{k\neq i}\lvert\mathcal{X}_{ik}\rvert$, and $\forall\,j\neq i\ \lvert\mathcal{X}_{jj}\rvert>\lvert\sum_{l\neq j}\mathcal{X}_{jl}\rvert$.
\item[(iv)]If $\mathcal{X}\hspace{-1pt}\in\hspace{-1pt}R^{2\times2}$ is sdd and $0\neq\mathcal{P}\hspace{-1pt}\in\hspace{-1pt}R$, then $\lvert\left(\mathcal{P}\mathcal{X}_{11}\right)\mathcal{X}_{22}-\mathcal{X}_{21}\mathcal{P}\mathcal{X}_{12}\rvert>0$.
\end{itemize}
\end{proposition}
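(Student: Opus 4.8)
The plan is to dispatch the four items one by one, each reducing to elementary estimates with the absolute value axioms (i)--(iii) and the consequence — noted in the text — that $R$ has no zero divisors, so that for $r\in R$ regularity is just $r\neq0$, equivalently $\lvert r\rvert>0$. The one combinatorial fact I will use repeatedly is: if $0\le b<a$ and $0\le d<c$ are reals, then $bd<ac$; this follows from $ac-bd=c(a-b)+b(c-d)$ together with the remark that $a-b>0$ rules out $c=0$.

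For (i), fix $i,j\in\{1,\ldots,N\}$, allowing $i=j$. Strict diagonal dominance gives $\lvert\mathcal{X}_{ii}\rvert>\sum_{k\ne i}\lvert\mathcal{X}_{ik}\rvert\ge0$, so each diagonal norm is strictly positive; applying the elementary fact with $a=\lvert\mathcal{X}_{ii}\rvert$, $b=\sum_{k\ne i}\lvert\mathcal{X}_{ik}\rvert$, $c=\lvert\mathcal{X}_{jj}\rvert$, $d=\sum_{l\ne j}\lvert\mathcal{X}_{jl}\rvert$ and using multiplicativity $\lvert\mathcal{X}_{ii}\mathcal{X}_{jj}\rvert=\lvert\mathcal{X}_{ii}\rvert\lvert\mathcal{X}_{jj}\rvert$ yields exactly the sdd inequality; a possibly vanishing off-diagonal sum causes no trouble precisely because the diagonal norms have already been shown to be strictly positive. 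For (ii), let $S$ index the principal submatrix; for $i\in S$, restricting the summation range from $k\ne i$ to $k\in S\setminus\{i\}$ only lowers the right-hand side, so the sd (resp.\ sdd) inequalities are inherited, and taking the $i$-th row (resp.\ the case $i=j$) forces $\lvert\mathcal{X}_{ii}\rvert>0$, i.e.\ $\mathcal{X}_{ii}$ regular.

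For (iii), set $I=\{i:\lvert\mathcal{X}_{ii}\rvert\le\sum_{k\ne i}\lvert\mathcal{X}_{ik}\rvert\}$, which is nonempty since $\mathcal{X}$ is not sd. If distinct $i,i'$ both lay in $I$, multiplying their two defining inequalities (all four sides nonnegative) would give $\lvert\mathcal{X}_{ii}\mathcal{X}_{i'i'}\rvert\le\bigl(\sum_{k\ne i}\lvert\mathcal{X}_{ik}\rvert\bigr)\bigl(\sum_{l\ne i'}\lvert\mathcal{X}_{i'l}\rvert\bigr)$, contradicting double dominance; hence $I=\{i\}$ is a singleton, and for $j\ne i$ one has $\lvert\mathcal{X}_{jj}\rvert>\sum_{l\ne j}\lvert\mathcal{X}_{jl}\rvert\ge\lvert\sum_{l\ne j}\mathcal{X}_{jl}\rvert$ by the triangle inequality. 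For (iv), commutativity of $R$ rewrites the expression as $\mathcal{P}\bigl(\mathcal{X}_{11}\mathcal{X}_{22}-\mathcal{X}_{12}\mathcal{X}_{21}\bigr)$, whose norm is $\lvert\mathcal{P}\rvert\,\lvert\mathcal{X}_{11}\mathcal{X}_{22}-\mathcal{X}_{12}\mathcal{X}_{21}\rvert$; here $\lvert\mathcal{P}\rvert>0$ because $\mathcal{P}\ne0$, while the sdd inequality gives $\lvert\mathcal{X}_{11}\mathcal{X}_{22}\rvert=\lvert\mathcal{X}_{11}\rvert\lvert\mathcal{X}_{22}\rvert>\lvert\mathcal{X}_{12}\rvert\lvert\mathcal{X}_{21}\rvert=\lvert\mathcal{X}_{12}\mathcal{X}_{21}\rvert$, so the reverse triangle inequality $\lvert u-v\rvert\ge\lvert u\rvert-\lvert v\rvert$ makes the second factor positive too, and the product of two positive reals is positive.

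There is no deep obstacle here: the proposition is a routine unwinding of the norm axioms. The only places that need a moment's care are the degenerate case of a vanishing off-diagonal sum in (i) and the ``exactly one index'' bookkeeping in (iii); both dissolve once one records that strict (double) diagonal dominance forces every diagonal norm to be strictly positive.
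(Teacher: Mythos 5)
The paper gives no proof of this proposition at all (it is dismissed as ``easy to see''), so there is no argument of the paper's to compare against; your write-up simply supplies the routine verification, and it is essentially correct. Two small wrinkles are worth flagging. First, in the elementary fact $0\le b<a$, $0\le d<c$ $\Rightarrow$ $bd<ac$, the strictness of $c(a-b)>0$ comes from $c>d\ge0$ forcing $c>0$, not from ``$a-b>0$ rules out $c=0$'' as you wrote; the fact is of course true, but the stated justification is misattributed. Second, you invoke the $i=j$ instance of the sdd condition (in (i) harmlessly, and in (ii) to get $\lvert\mathcal{X}_{ii}\rvert>0$). As the definition is literally printed the quantifier does range over all $i,j$, but under that reading sdd would already imply sd (take $i=j$ and extract square roots), which would make part (iii) vacuous and the sdd-but-not-sd case in the proof of lemma~\ref{lem_sdd} pointless; the intended reading, as in the cited reference \cite{LITsa97}, is $i\neq j$. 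Under that reading your arguments for (i), (iii) and (iv) go through unchanged, and in (ii) the regularity of $\mathcal{X}_{ii}$ is recovered by pairing $i$ with any $j\neq i$ (which exists since $n\geq2$): the sdd inequality gives $\lvert\mathcal{X}_{ii}\rvert\lvert\mathcal{X}_{jj}\rvert>0$, hence $\lvert\mathcal{X}_{ii}\rvert>0$ and, since the multiplicative norm excludes zero divisors, $\mathcal{X}_{ii}$ is regular. With that one-line adjustment the proof is complete.
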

We will use the following lemma, which is familiar from $R=\mathbb{C}$~\cite{LITsa97}.
\begin{lemma}\label{lem_sdd}
Let $\mathcal{M}=\left(\begin{smallmatrix}\mathcal{A}&\mathcal{B}\\\mathcal{C}&\mathcal{D}\end{smallmatrix}\right)\in R^{\left(n+s\right)\times\left(n+s\right)}$ and let $\mathcal{P}\in R^{n\times n}$ not all zero and $a\in R$ s.t. $\mathcal{P}\mathcal{A}=a\mathbf{I}_n$. If $\mathcal{M}$ is sd (sdd), then
\begin{itemize}
\item[(i)] the Schur-like complement $\mathcal{S}=\mathrm{a}\mathcal{D}-\mathcal{C}\mathcal{P}\mathcal{B}$ is sd (sdd), and
\item[(ii)] $\mathcal{M}$, $\mathcal{A}$ and $\mathcal{S}$ are regular.
\end{itemize}
\end{lemma}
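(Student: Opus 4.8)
The plan is to transfer the statement to the fraction field $F$ of $R$, on which the absolute value extends multiplicatively via $\lvert r/s\rvert=\lvert r\rvert/\lvert s\rvert$. This is legitimate because ``$\mathcal{X}$ regular over $R$'' means $\det\mathcal{X}\neq 0$, which is the same as ``$\mathcal{X}$ invertible over $F$'', and because both dominance conditions are homogeneous in the entries: for $0\neq c\in F$, the matrix $c\mathcal{X}$ is sd (sdd) if and only if $\mathcal{X}$ is.

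I would settle part (ii) first. \emph{Claim: every sd (sdd) matrix $\mathcal{X}\in F^{N\times N}$ is invertible.} If not, fix $v\neq\mathbf{0}$ with $\mathcal{X}v=\mathbf{0}$ and let $v_i$ carry maximal modulus. In the sd case, $\mathcal{X}_{ii}v_i=-\sum_{k\neq i}\mathcal{X}_{ik}v_k$ together with $\lvert v_i\rvert>0$ gives the contradiction $\lvert\mathcal{X}_{ii}\rvert\leq\sum_{k\neq i}\lvert\mathcal{X}_{ik}\rvert$. In the sdd case, if $v$ has a single nonzero entry $v_i$ then $\mathcal{X}_{ii}v_i=0$ forces $\mathcal{X}_{ii}=0$, contradicting Proposition~\ref{pro_sdd}(ii); otherwise let $v_j$ carry the second largest modulus, bound $\lvert\mathcal{X}_{ii}\rvert\lvert v_i\rvert\leq\lvert v_j\rvert\sum_{k\neq i}\lvert\mathcal{X}_{ik}\rvert$ and $\lvert\mathcal{X}_{jj}\rvert\lvert v_j\rvert\leq\lvert v_i\rvert\sum_{k\neq j}\lvert\mathcal{X}_{jk}\rvert$, multiply, and cancel $\lvert v_iv_j\rvert>0$ to contradict sdd. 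Hence $\mathcal{M}$ is regular; and $\mathcal{A}$, a principal submatrix of $\mathcal{M}$, is sd (sdd) for $n\geq2$ by Proposition~\ref{pro_sdd}(ii) and a nonzero scalar for $n=1$, so $\mathcal{A}$ is regular. Since $\mathcal{P}\neq\mathbf{0}$, Proposition~\ref{pro_reg} then yields $a$ regular, and $a^{s}\det\mathcal{M}=\det\mathcal{A}\det\mathcal{S}$ from Lemma~\ref{lem_Schur} forces $\det\mathcal{S}\neq 0$.

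For part (i), over $F$ we have $\mathcal{P}=a\mathcal{A}^{-1}$, hence $\mathcal{S}=a\bigl(\mathcal{D}-\mathcal{C}\mathcal{A}^{-1}\mathcal{B}\bigr)$; by homogeneity it suffices to show the ordinary Schur complement $\mathcal{D}-\mathcal{C}\mathcal{A}^{-1}\mathcal{B}$ is sd (sdd), i.e.\ to reprove the complex-case statement over $F$. The heart is $n=1$, where this matrix has entries $\mathcal{D}_{ij}-\mathcal{C}_i\mathcal{M}_{11}^{-1}\mathcal{B}_j$. In the sd case the triangle inequality gives $\lvert\mathcal{D}_{ii}-\mathcal{C}_i\mathcal{M}_{11}^{-1}\mathcal{B}_i\rvert-\sum_{j\neq i}\lvert\mathcal{D}_{ij}-\mathcal{C}_i\mathcal{M}_{11}^{-1}\mathcal{B}_j\rvert\geq\bigl(\lvert\mathcal{D}_{ii}\rvert-\sum_{j\neq i}\lvert\mathcal{D}_{ij}\rvert\bigr)-\lvert\mathcal{C}_i\rvert\lvert\mathcal{M}_{11}\rvert^{-1}\sum_{j}\lvert\mathcal{B}_j\rvert$, which is $>\lvert\mathcal{C}_i\rvert\bigl(1-\lvert\mathcal{M}_{11}\rvert^{-1}\sum_{j}\lvert\mathcal{B}_j\rvert\bigr)\geq0$, using that row $i$ of $\mathcal{M}$ yields $\lvert\mathcal{D}_{ii}\rvert-\sum_{j\neq i}\lvert\mathcal{D}_{ij}\rvert>\lvert\mathcal{C}_i\rvert$ and the pivot row yields $\lvert\mathcal{M}_{11}\rvert>\sum_j\lvert\mathcal{B}_j\rvert$. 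The sdd case is the corresponding doubly-dominant estimate, the content of~\cite{LITsa97}, whose degenerate instance $s=1$ is Proposition~\ref{pro_sdd}(iv). For general $n$ I would return to $R$ and iterate single-index pivots: the leading principal submatrices $\mathcal{A}_1,\dots,\mathcal{A}_n$ of $\mathcal{M}$ are regular by part (ii), so Proposition~\ref{pro_QP} supplies a regular $\varsigma$ with $\varsigma\mathcal{S}=\mathcal{S}^{(n)}$, where $\mathcal{S}^{(0)}=\mathcal{M}$ and each $\mathcal{S}^{(k+1)}$ is a $1\times 1$-pivot Schur-like complement of $\mathcal{S}^{(k)}$; by the $n=1$ case each $\mathcal{S}^{(k)}$ is sd (sdd) in turn, with regular diagonal (Proposition~\ref{pro_sdd}(ii)) so the next pivot is legitimate, whence $\mathcal{S}^{(n)}$, and then $\mathcal{S}=\varsigma^{-1}\mathcal{S}^{(n)}$, are sd (sdd).

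The step I expect to be the real obstacle is the sdd case of the $n=1$ estimate. Unlike the sd case, the crude reverse-triangle bound on the diagonal term is too weak and the two indices cannot be decoupled; one must keep the product $\lvert\mathcal{S}_{ii}\mathcal{S}_{jj}\rvert$ together and combine the row-sum estimates for $i$ and $j$ before multiplying. This is the one place where the argument of~\cite{LITsa97} must be genuinely redone rather than merely carried over to a valued field.
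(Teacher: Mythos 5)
Your outer scaffolding is sound and largely parallels the paper: passing to the fraction field (legitimate, since the multiplicative absolute value forces $R$ to be a domain and extends to its quotient field), the kernel argument for part (ii) (the paper instead gets regularity by induction over principal submatrices of co-dimension one, using the size-one complement and lemma~\ref{lem_Schur}, but your route is correct and arguably cleaner), and the reduction of general $n$ to iterated $1\times1$ pivots via proposition~\ref{pro_QP} together with homogeneity of the sd/sdd conditions under multiplication by a nonzero scalar. The sd half of the $n=1$ estimate you give is also correct.

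The genuine gap is exactly the step you flag at the end: the sdd half of the $n=1$ estimate is never proved, only deferred to~\cite{LITsa97}, which is stated over $\mathbb{C}$ and which you yourself concede ``must be genuinely redone'' over a valued field. That estimate is not a routine adaptation; it is the bulk of the paper's proof. Concretely, by proposition~\ref{pro_sdd}(iii) one must split into three cases: $\mathcal{M}$ sd; the pivot row is the deficient one, i.e.\ $\sum_\gamma\lvert\mathcal{B}_{1\gamma}\rvert\geq\lvert\mathcal{A}\rvert>0$; or some row $\alpha$ of $\mathcal{D}$ is deficient, i.e.\ $d_\alpha+c_\alpha\geq\lvert\mathcal{D}_{\alpha\alpha}\rvert>0$. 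In the second case one shows the complement is in fact dominant row by row using $b\geq\lvert\mathcal{A}\rvert$ in place of $\lvert\mathcal{A}\rvert>b$; in the third case one must bound the product $\lvert\mathcal{S}_{\alpha\alpha}\mathcal{S}_{\beta\beta}\rvert$ without decoupling the two rows, which the paper does by introducing the normalizing factor $y=\lvert\mathcal{D}_{\alpha\alpha}\rvert/\left(d_\alpha+c_\alpha\right)\leq1$, deducing $y\lvert\mathcal{A}\rvert>b$ and $y\lvert\mathcal{D}_{\beta\beta}\rvert>d_\beta+c_\beta$ from sdd of $\mathcal{M}$, and then factoring the lower bound as a product of two row-sum bounds. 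None of this appears in your proposal, and without it neither the $n=1$ case nor the subsequent pivot iteration delivers the sdd conclusion (i); note also that your part (ii) is independent of this, but the second inclusion of the spectral-bound theorem in section~\ref{sec_ISR} relies precisely on the sdd half you left open. So the proposal as it stands proves the sd statement and part (ii), but not the sdd statement, which is the substantive content of the lemma.
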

\begin{proof}
By proposition~\ref{pro_sdd}(iv), the lemma holds for $n+s=2$. In order to use induction, we first consider the case $n=1$ and abbreviate
\begin{equation}
b_{\gamma}=\lvert\mathcal{B}_{1\gamma}\rvert,\quad
b=\textstyle{\text{$\sum_{\gamma}b_{\gamma}$}},\quad
c_{\gamma}=\lvert\mathcal{C}_{\gamma1}\rvert,\quad
d_{\gamma}=\textstyle{\text{$\sum_{\gamma'\neq\gamma}$}}\lvert\mathcal{D}_{\gamma\gamma'}\rvert.
\end{equation}
According to proposition~\ref{pro_sdd}(iii), we distinguish three possibilities.
$$(1)\ \mathcal{M}\text{ is sd},\qquad(2)\ b\geq\lvert\mathcal{A}\rvert>0,\qquad(3)\ \exists\,\alpha\text{ s.t. }d_{\alpha}+c_{\alpha}\geq\lvert\mathcal{D}_{\alpha\alpha}\rvert>0$$
(1) $\mathcal{M}$ is sd.
\begin{align}
\lvert\mathcal{S}_{\alpha\alpha}\rvert&=\lvert a\mathcal{D}_{\alpha\alpha}-\mathcal{C}_{\alpha1}\mathcal{P}\mathcal{B}_{1\alpha}\rvert
\geq\lvert\mathcal{P}\rvert\left(\lvert A\mathcal{D}_{\alpha\alpha}\rvert-c_{\alpha}b_{\alpha}\right)\\
&>\lvert\mathcal{P}\rvert\left(\lvert\mathcal{A}\rvert\left(d_{\alpha}+c_{\alpha}\right)-c_{\alpha}b_{\alpha}\right)
\quad\qquad\qquad\qquad\qquad
\text{\small{(since $\mathcal{M}$ is sd)}}\\
&\geq\lvert\mathcal{P}\rvert\left(\lvert\mathcal{A}\rvert d_{\alpha}+
c_{\alpha}\textstyle{\text{$\sum_{\gamma\neq\alpha}$}}b_{\gamma}\right)
\quad\qquad\qquad\qquad\qquad\,
\text{\small{(since $\lvert\mathcal{A}\rvert> b$)}}\\
&\geq\textstyle{\text{$\sum_{\gamma\neq\alpha}$}}\lvert
a\mathcal{D}_{\alpha\beta}-\mathcal{C}_{\alpha1}\mathcal{P}\mathcal{B}_{1\gamma}
\rvert=\textstyle{\text{$\sum_{\gamma\neq\alpha}$}}\lvert\mathcal{S}_{\alpha\gamma}\rvert
\end{align}
(2) $b\geq\lvert\mathcal{A}\rvert>0$.
\begin{align}
\lvert\mathcal{S}_{\alpha\alpha}\rvert
&=\lvert a\mathcal{D}_{\alpha\alpha}
-\mathcal{C}_{\alpha1}\mathcal{P}\mathcal{B}_{1\alpha}\rvert
\geq\lvert\mathcal{P}\rvert\left(\lvert \mathcal{A}\mathcal{D}_{\alpha\alpha}\rvert
-c_{\alpha}b_{\alpha}\rvert\right)\\
&>\lvert\mathcal{P}\rvert\left(b\left(d_{\alpha}+c_{\alpha}\right)-c_{\alpha}b_{\alpha}\right)
\quad\qquad\qquad\qquad\qquad\quad
\text{\small{(since $\mathcal{M}$ is sdd)}}\\
&\geq\lvert\mathcal{P}\rvert\left(\lvert\mathcal{A}\rvert d_{\alpha}+
c_{\alpha}\textstyle{\text{$\sum_{\gamma\neq\alpha}$}}b_{\gamma}\right)
\quad\qquad\qquad\qquad\qquad\ 
\text{\small{(since $b\geq\lvert\mathcal{A}\rvert$)}}\\
&\geq\textstyle{\text{$\sum_{\gamma\neq\alpha}$}}\lvert a\mathcal{D}_{\alpha\gamma}-\mathcal{C}_{\alpha1}\mathcal{P}\mathcal{B}_{1\gamma}\rvert
=\sum_{\gamma\neq\alpha}\vert\mathcal{S}_{\alpha\gamma}\rvert
\end{align}
\phantom{(3) }
\newline
(3) $d_{\alpha}+c_{\alpha}\geq\lvert\mathcal{D}_{\alpha\alpha}\rvert>0$. The case of $s=1$ is considered in proposition~\ref{pro_sdd}(iv). 
\phantom{(3) }Let $s\geq2$ and set $y=\lvert\mathcal{D}_{\alpha\alpha}\rvert/\left(d_{\alpha}+c_{\alpha}\right)\leq1$. Since $\mathcal{M}$ is sdd, we obtain 
\begin{equation}\label{eq_yA>b}
\lvert\mathcal{A}\rvert\geq y\lvert\mathcal{A}\rvert>b,
\qquad\forall\beta\neq\alpha\ 
\lvert\mathcal{D}_{\beta\beta}\rvert\geq y\lvert\mathcal{D}_{\beta\beta}\rvert>d_{\beta}+c_{\beta}
\qquad\text{and}
\end{equation}
\begin{align}
\lvert\mathcal{S}_{\alpha\alpha}\mathcal{S}_{\beta\beta}\rvert
&=\lvert\left(
a\mathcal{D}_{\alpha\alpha}-\mathcal{C}_{\alpha1}\mathcal{P}\mathcal{B}_{1\alpha}\right)\left(
a\mathcal{D}_{\beta\beta}-\mathcal{C}_{\beta1}\mathcal{P}\mathcal{B}_{1\beta}
\right)\rvert\\
&\geq\lvert\mathcal{P}\rvert^2\left(\lvert
\mathcal{A}\mathcal{D}_{\alpha\alpha}\rvert-c_{\alpha}b_{\alpha}\right)
\left(\lvert\mathcal{A}\mathcal{D}_{\beta\beta}\rvert-c_{\beta}b_{\beta}
\right)\\
&=\lvert\mathcal{P}\rvert^2
\left(\lvert\mathcal{A}\rvert^2d_{\alpha}y\lvert\mathcal{D}_{\beta\beta}\rvert
-\lvert\mathcal{A}\rvert yd_{\alpha}c_{\beta}b_{\beta}\right.\nonumber\\
&\phantom{>\lvert\mathcal{P}\rvert^2(}
\left.+c_{\alpha}\left(\lvert\mathcal{A}\rvert y-b_{\alpha}\right)
\left(\lvert\mathcal{A}\mathcal{D}_{\beta\beta}\rvert-c_{\beta}b_{\beta}\right)
\right)\\
&>\lvert\mathcal{P}\rvert^2
\left(\lvert\mathcal{A}\rvert^2d_{\alpha}
\left(d_{\beta}+c_{\beta}\right)
-\lvert\mathcal{A}\rvert d_{\alpha}c_{\beta}b_{\beta}\right.\nonumber\\
&\phantom{>\lvert\mathcal{P}\rvert^2(}
\left.+c_{\alpha}\left(b-b_{\alpha}\right)
\left(\lvert\mathcal{A}\rvert\left(d_{\beta}+c_{\beta}\right)-c_{\beta}b_{\beta}\right)\right)
\qquad\qquad\ \text{\small{(by \eqref{eq_yA>b})}}\\
&=\lvert\mathcal{P}\rvert^2
\left(\lvert\mathcal{A}\rvert d_{\alpha}+c_{\alpha}\left(b-b_{\alpha}\right)\right)
\left(\lvert\mathcal{A}\rvert d_{\beta}+c_{\beta}\left(\lvert\mathcal{A}\rvert-b_{\beta}\right)\right)\nonumber\\
&\geq
\left(\lvert a\rvert d_{\alpha}+
c_{\alpha}\lvert\mathcal{P}\rvert\textstyle{\text{$\sum_{\gamma\neq\alpha}$}}b_{\gamma}\right)
\left(\lvert a\rvert d_{\beta}+
c_{\beta}\lvert\mathcal{P}\rvert\textstyle{\text{$\sum_{\delta\neq\beta}$}}b_{\delta}\right)
\quad\text{\small{(by \eqref{eq_yA>b})}}\\
&\geq
\textstyle{\text{$\sum_{\gamma\neq\alpha}$}}
\lvert a\mathcal{D}_{\alpha\gamma}-\mathcal{C}_{\alpha1}\mathcal{P}\mathcal{B}_{1\gamma}\rvert
\textstyle{\text{$\sum_{\delta\neq\beta}$}}
\lvert a\mathcal{D}_{\beta\delta}-\mathcal{C}_{\beta1}\mathcal{P}\mathcal{B}_{1\delta}\rvert
\\&
=
\textstyle{\text{$\sum_{\gamma\neq\alpha}$}}\lvert\mathcal{S}_{\alpha\gamma}\rvert\textstyle{\text{$\sum_{\delta\neq\beta}$}}\lvert\mathcal{S}_{\beta\delta}\rvert
\end{align}
Under the additional assumption that all proper principal submatrices of $\mathcal{M}$ are regular, which implies, by proposition~\ref{pro_reg}, that $\mathcal{A}$, $\mathcal{P}$ and $a$ are regular, we can, according to proposition~\ref{pro_QP}, apply the above result recursively, showing (i) for all $n\geq1$ under the made assumption.

In order to show that the assumption holds and to prove (ii), we consider any principal submatrix of $\mathcal{M}$ of the form
$\mathcal{M}'=\bigl(\begin{smallmatrix}\mathcal{A}'&\mathcal{B}'\\\mathcal{C}'&d'\end{smallmatrix}\bigr)\in R^{\left(n'+1\right)\times\left(n'+1\right)}$ s.t. $\mathcal{A}'\in R^{n'\times n'}$ and all its principal submatrices are regular, which holds for $n'=1$. By proposition~\ref{pro_sdd}(ii) and~\ref{pro_sdd}(i), $\mathcal{M}'$ is sdd. As shown above a Schur-like complement of $\mathcal{A}'$ in $\mathcal{M}'$, since it is of size $1$, is regular. Thus, $\det\left(\mathcal{M}'\right)\hspace{-2pt}\neq\hspace{-2pt}0$ by lemma~\ref{lem_Schur}.  
By induction over $n'$ follows that $\mathcal{M}$ and all its principal submatrices are regular, which completes the proof of (i) and implies that $\mathcal{A}$ and, by lemma~\ref{lem_Schur}, $\mathcal{S}$ are regular.
\end{proof}
\section{Basic Theorems}\label{sec_BT}
Let R be a commutative ring and let $\mathcal{M}\in R^{N\times N}$ be partitioned as \begin{equation}\mathcal{M}=\left(\begin{array}{cc}\mathcal{A}&\mathcal{B}\\\mathcal{C}&\mathcal{D}\end{array}\right)\in R^{\left(n+s\right)\times\left(n+s\right)}.
\end{equation}
\begin{theorem}\label{theo_main2} Let $\mathcal{X},\mathcal{Y}\hspace{-1pt}\in\hspace{-1pt}R^{n\times n}$ 
be regular, or let any pair in $\left\{\mathcal{A},\mathcal{X},\mathcal{Y}\right\}$ commute. Let $f\hspace{-1pt}\in\hspace{-1pt}R$ and set 
$\tilde{\mathcal{A}}\hspace{-1pt}=\hspace{-1pt}f\mathbf{I}_n-\mathcal{X}\mathcal{A}\mathcal{Y}$. Let $\mathrm{a}\left(x\right)\hspace{-1pt}\in\hspace{-1pt}R\left[x\right]\hspace{-1pt}\setminus\hspace{-1pt}\left\{0\right\}$ be a (non trivial) annihilating polynomial for $\tilde{\mathcal{A}}$ and set
\begin{equation}
\mathcal{P}=\mathcal{Y}\mathrm{p}\left(f,\tilde{\mathcal{A}};\mathrm{a}\right)\mathcal{X}
\qquad\text{and}\qquad
\mathcal{S}=\mathrm{a}\left(f\right)\mathcal{D}-\mathcal{C}\mathcal{P}\mathcal{B}.
\end{equation}
Then
$\left(\mathrm{a}\left(f\right)\right)^s\det\left(\mathcal{M}\right)=\det\left(\mathcal{A}\right)\det\left(\mathcal{S}\right).$
\end{theorem}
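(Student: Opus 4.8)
The strategy is to recognize the theorem's $\mathcal{S}=\mathrm{a}(f)\mathcal{D}-\mathcal{C}\mathcal{P}\mathcal{B}$ as the Schur-like complement of $\mathcal{A}$ in $\mathcal{M}$ furnished by the pair $(\mathcal{P},a)$ with $a:=\mathrm{a}(f)$, and then to quote identity \eqref{lem_Schur1} of Lemma~\ref{lem_Schur}. Thus the whole task reduces to verifying the two relations $\mathcal{P}\mathcal{A}=\mathcal{A}\mathcal{P}=\mathrm{a}(f)\mathbf{I}_n$. The one nonelementary ingredient is Corollary~\ref{cor_adj}, applied \emph{not} to $\mathcal{A}$ but to the product $\mathcal{X}\mathcal{A}\mathcal{Y}$: since $\tilde{\mathcal{A}}=f\mathbf{I}_n-\mathcal{X}\mathcal{A}\mathcal{Y}$ and, by hypothesis, $\mathrm{a}(\tilde{\mathcal{A}})=\mathbf{0}$, the corollary with its dummy matrix taken to be $\mathcal{X}\mathcal{A}\mathcal{Y}$ yields, writing $\Pi:=\mathrm{p}(f,\tilde{\mathcal{A}};\mathrm{a})$, the commutation $(\mathcal{X}\mathcal{A}\mathcal{Y})\,\Pi=\Pi\,(\mathcal{X}\mathcal{A}\mathcal{Y})=\mathrm{a}(f)\mathbf{I}_n$. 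All that remains is to move the factors $\mathcal{X}$ and $\mathcal{Y}$ out of the way, which is exactly where the two alternative hypotheses come in.

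In the regular case I would cancel $\mathcal{X}$ and $\mathcal{Y}$ using classical adjugates. From $\mathcal{P}=\mathcal{Y}\Pi\mathcal{X}$ one gets $\mathcal{P}\mathcal{A}\mathcal{Y}=\mathcal{Y}\bigl(\Pi\,\mathcal{X}\mathcal{A}\mathcal{Y}\bigr)=\mathrm{a}(f)\mathcal{Y}$, i.e.\ $\bigl(\mathcal{P}\mathcal{A}-\mathrm{a}(f)\mathbf{I}_n\bigr)\mathcal{Y}=\mathbf{0}$; right-multiplying by $\mathcal{Y}^{\mathrm{adj}}$ and using $\mathcal{Y}\mathcal{Y}^{\mathrm{adj}}=\det(\mathcal{Y})\mathbf{I}_n$ gives $\det(\mathcal{Y})\bigl(\mathcal{P}\mathcal{A}-\mathrm{a}(f)\mathbf{I}_n\bigr)=\mathbf{0}$, and since $\det(\mathcal{Y})$ is regular, hence not a zero divisor, every entry of $\mathcal{P}\mathcal{A}-\mathrm{a}(f)\mathbf{I}_n$ vanishes. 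Symmetrically, $\mathcal{X}\mathcal{A}\mathcal{P}=\bigl(\mathcal{X}\mathcal{A}\mathcal{Y}\,\Pi\bigr)\mathcal{X}=\mathrm{a}(f)\mathcal{X}$, so $\mathcal{X}\bigl(\mathcal{A}\mathcal{P}-\mathrm{a}(f)\mathbf{I}_n\bigr)=\mathbf{0}$, and left-cancelling $\mathcal{X}$ via $\mathcal{X}^{\mathrm{adj}}\mathcal{X}=\det(\mathcal{X})\mathbf{I}_n$ yields $\mathcal{A}\mathcal{P}=\mathrm{a}(f)\mathbf{I}_n$.

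In the commuting case, $\tilde{\mathcal{A}}=f\mathbf{I}_n-\mathcal{X}\mathcal{A}\mathcal{Y}$ commutes with each of $\mathcal{A},\mathcal{X},\mathcal{Y}$ (the product $\mathcal{X}\mathcal{A}\mathcal{Y}$ commutes with each of its pairwise-commuting factors), hence so does $\Pi$, being a polynomial in $\tilde{\mathcal{A}}$ with coefficients in $R$. Then $\mathcal{P}=\mathcal{Y}\Pi\mathcal{X}=\Pi\mathcal{X}\mathcal{Y}$, so $\mathcal{P}\mathcal{A}=\Pi\,\mathcal{X}\mathcal{Y}\mathcal{A}=\Pi\,\mathcal{X}\mathcal{A}\mathcal{Y}=\mathrm{a}(f)\mathbf{I}_n$ straight from the commutation above, and since $\mathcal{A}$ and $\mathcal{P}$ commute, $\mathcal{A}\mathcal{P}=\mathrm{a}(f)\mathbf{I}_n$ as well.

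In either case $\mathcal{P}\mathcal{A}=\mathcal{A}\mathcal{P}=\mathrm{a}(f)\mathbf{I}_n$, so $(\mathcal{P},\mathrm{a}(f))$ induces $\mathcal{S}$ as its Schur-like complement and \eqref{lem_Schur1} gives $(\mathrm{a}(f))^s\det(\mathcal{M})=\det(\mathcal{A})\det(\mathcal{S})$; note that only the left multiplier $\mathcal{L}$ and the relation $\mathcal{P}\mathcal{A}=\mathrm{a}(f)\mathbf{I}_n$ enter the proof of \eqref{lem_Schur1}, so no opponent is needed and the degenerate subcase $\mathrm{a}(f)=0$ is covered automatically. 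I expect the only real friction to be the bookkeeping in the regular case: the matrices $\mathcal{X},\mathcal{Y}$ need not be invertible, and legitimizing their cancellation is precisely what forces the use of the classical adjugate identity together with the no-zero-divisor property of a regular determinant; everything else is a short formal manipulation of the identity for $\Pi$.
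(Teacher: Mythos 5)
Your proposal is correct and follows essentially the same route as the paper: apply Corollary~\ref{cor_adj} to $\mathcal{X}\mathcal{A}\mathcal{Y}$ to get $\Pi\,(\mathcal{X}\mathcal{A}\mathcal{Y})=(\mathcal{X}\mathcal{A}\mathcal{Y})\,\Pi=\mathrm{a}\left(f\right)\mathbf{I}_n$, deduce $\mathcal{P}\mathcal{A}=\mathcal{A}\mathcal{P}=\mathrm{a}\left(f\right)\mathbf{I}_n$ in both the regular and the pairwise-commuting cases, then invoke \eqref{lem_Schur1}. You merely make explicit two points the paper leaves implicit --- the cancellation of regular $\mathcal{X},\mathcal{Y}$ via the classical adjugate and the fact that \eqref{lem_Schur1} needs only the left multiplier, so no opponent and no nondegeneracy of $\mathrm{a}\left(f\right)$ is required --- which is sound.
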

\begin{proof}From corollary~\ref{cor_adj} follows $\mathbf{0}=\mathcal{Y}\left(\mathrm{p}\mathcal{X}\mathcal{A}\mathcal{Y}-\mathrm{a}\left(f\right)\right)=\left(\mathcal{P}\mathcal{A}-\mathrm{a}\left(f\right)\right)\mathcal{Y}$ 
and similarly $\mathbf{0}\hspace{-1pt}=\hspace{-1pt}\mathcal{X}\left(\mathcal{A}\mathcal{P}-\mathrm{a}\left(f\right)\right)$. For regular $\mathcal{X}$, $\mathcal{Y}$ follows $\mathcal{P}\hspace{-1pt}\mathcal{A}\hspace{-1pt}=\hspace{-1pt}\mathcal{A}\mathcal{P}\hspace{-1pt}=\hspace{-1pt}\mathrm{a}\left(f\right)\mathbf{I}_n$, which also follows from corollary~\ref{cor_adj} for pairwise commuting $\mathcal{A},\mathcal{X}$ and $\mathcal{Y}$. Thus, lemma~\ref{lem_Schur} applies.
\end{proof}
The next lemma relates the kernels of the homomorphisms given by $\mathcal{M}$ and $\mathcal{S}$. Let $L$ be a left $R$-module and denote the \emph{kernel} of $\mathcal{Z}\in R^{q\times r}$ as
\begin{equation}
\ker\left(\mathcal{Z}\right)=\left\{\mathbf{y}\in L^{r}:\mathcal{Z}\mathbf{y}=0_L\,\mathbf{j}_q\right\}
\quad\text{with}\quad\mathbf{j}_q=\left(1,\ldots,1\right)^T=\left\{1\right\}^q.
\end{equation}
\begin{lemma}\label{lem_ker}
Maintaining the notation of theorem \ref{theo_main2}, let 
$\mathbf{v}_{0}\in\ker\left(\mathcal{A}\right)\cap\ker\left(\mathcal{C}\right)$ and $\mathbf{w}_{\mathcal{S}}\in\ker\left(\mathcal{S}\right)$. Then
\begin{equation}
\mathbf{u}=
\left(\begin{array}{c}
-\mathcal{Y}\mathrm{p}\left(f,\tilde{\mathcal{A}};\mathrm{a}\right)\mathcal{X}\mathcal{B}\\
\mathrm{a}\left(f\right)\mathbf{I}_s
\end{array}\right)\mathbf{w}_{\mathcal{S}}+
\left(\begin{array}{c}
\mathbf{v}_{0}\\
0_L\,\mathbf{j}_s
\end{array}\right)
\in\ker\left(\mathcal{M}\right).
\end{equation}
\end{lemma}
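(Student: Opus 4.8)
The plan is to reduce everything to the identity $\mathcal{A}\mathcal{P}=\mathcal{P}\mathcal{A}=\mathrm{a}\left(f\right)\mathbf{I}_n$, which was already established inside the proof of Theorem~\ref{theo_main2} (it holds both in the regular case and in the commuting case), and then simply to multiply $\mathcal{M}\mathbf{u}$ out blockwise. First I would note that $\mathcal{Y}\mathrm{p}\left(f,\tilde{\mathcal{A}};\mathrm{a}\right)\mathcal{X}=\mathcal{P}$, so that
\[
\mathbf{u}=\left(\begin{array}{c}\mathbf{v}_{0}-\mathcal{P}\mathcal{B}\,\mathbf{w}_{\mathcal{S}}\\ \mathrm{a}\left(f\right)\mathbf{w}_{\mathcal{S}}\end{array}\right),
\]
where the coordinates lie in the left $R$-module $L$ and the scalar $\mathrm{a}\left(f\right)\in R$ may be moved freely past the $R$-linear maps $\mathcal{A},\mathcal{B},\mathcal{C},\mathcal{D}$ acting on $L$-tuples (this is the only place the commutativity of $R$ and the module structure of $L$ are used).

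Next I would compute the two blocks of $\mathcal{M}\mathbf{u}$ separately. The top block is $\mathcal{A}\left(\mathbf{v}_{0}-\mathcal{P}\mathcal{B}\,\mathbf{w}_{\mathcal{S}}\right)+\mathcal{B}\,\mathrm{a}\left(f\right)\mathbf{w}_{\mathcal{S}}$; since $\mathbf{v}_{0}\in\ker\left(\mathcal{A}\right)$ gives $\mathcal{A}\mathbf{v}_{0}=0_L\,\mathbf{j}_n$ and $\mathcal{A}\mathcal{P}=\mathrm{a}\left(f\right)\mathbf{I}_n$, this collapses to $-\mathrm{a}\left(f\right)\mathcal{B}\,\mathbf{w}_{\mathcal{S}}+\mathrm{a}\left(f\right)\mathcal{B}\,\mathbf{w}_{\mathcal{S}}=0_L\,\mathbf{j}_n$. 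The bottom block is $\mathcal{C}\left(\mathbf{v}_{0}-\mathcal{P}\mathcal{B}\,\mathbf{w}_{\mathcal{S}}\right)+\mathcal{D}\,\mathrm{a}\left(f\right)\mathbf{w}_{\mathcal{S}}$; since $\mathbf{v}_{0}\in\ker\left(\mathcal{C}\right)$ gives $\mathcal{C}\mathbf{v}_{0}=0_L\,\mathbf{j}_s$, this becomes $\left(\mathrm{a}\left(f\right)\mathcal{D}-\mathcal{C}\mathcal{P}\mathcal{B}\right)\mathbf{w}_{\mathcal{S}}=\mathcal{S}\mathbf{w}_{\mathcal{S}}=0_L\,\mathbf{j}_s$ by the hypothesis $\mathbf{w}_{\mathcal{S}}\in\ker\left(\mathcal{S}\right)$. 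Hence $\mathcal{M}\mathbf{u}=0_L\,\mathbf{j}_{n+s}$, i.e. $\mathbf{u}\in\ker\left(\mathcal{M}\right)$.

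There is essentially no obstacle in the algebra itself — the argument is the two-line block computation above. The only point that requires a little care is the bookkeeping over a module rather than a ring: one must check that $\mathrm{a}\left(f\right)$ commutes through the block entries and that the identity $\mathcal{A}\mathcal{P}=\mathrm{a}\left(f\right)\mathbf{I}_n$, proved in $R^{n\times n}$, still applies when both sides act on $L^{n}$. Since $L$ is a left $R$-module and $R$ is commutative, both are immediate, so the proof consists precisely of the substitution $\mathcal{Y}\mathrm{p}\left(f,\tilde{\mathcal{A}};\mathrm{a}\right)\mathcal{X}=\mathcal{P}$ followed by the blockwise multiplication and the three cancellations noted above.
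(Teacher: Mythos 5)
Your proposal is correct and follows essentially the same route as the paper: both invoke the identity $\mathcal{A}\mathcal{P}=\mathrm{a}\left(f\right)\mathbf{I}_n$ established in the proof of Theorem~\ref{theo_main2} and then verify $\mathcal{M}\mathbf{u}=0_L\,\mathbf{j}_{n+s}$ by a direct blockwise computation, the top block cancelling via $\mathcal{A}\mathbf{v}_0=0_L\,\mathbf{j}_n$ and $\left(-\mathcal{A}\mathcal{P}+\mathrm{a}\left(f\right)\right)\mathcal{B}=\mathbf{0}$, and the bottom block reducing to $\mathcal{S}\mathbf{w}_{\mathcal{S}}=0_L\,\mathbf{j}_s$ together with $\mathcal{C}\mathbf{v}_0=0_L\,\mathbf{j}_s$. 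Your extra remarks on moving the scalar $\mathrm{a}\left(f\right)$ past the blocks over the left $R$-module $L$ are harmless bookkeeping that the paper leaves implicit.
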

\begin{proof} According to the proof of theorem \ref{theo_main2}, $\mathcal{A}\mathcal{P}\hspace{-1pt}=\hspace{-1pt}\mathrm{a}\left(f\right)\mathbf{I}_n$. Therefore,
\begin{equation*}
\left(\begin{array}{cc}
\mathcal{A}&\mathcal{B}\\
\mathcal{C}&\mathcal{D}
\end{array}\right)
\mathbf{u}
=
\left(\begin{array}{c}
\left(-\mathcal{A}\mathcal{P}+\mathrm{a}\left(f\right)\right)\mathcal{B}\\
\mathcal{S}
\end{array}\right)\mathbf{w}_{\mathcal{S}}+
\left(\begin{array}{c}
\mathbf{A}\\
\mathbf{C}
\end{array}\right)\mathbf{v}_{0}=
0_L\mathbf{j}_{n+s}.
\qedhere\end{equation*}
\end{proof}
The concluding result of this section is complementary to theorem \ref{theo_main2}. It provides a relation between the determinants of $\mathcal{M}$ and $\mathcal{A}$ based on an annihilating polynomial for a scaled and shifted $\mathcal{M}$.
\begin{theorem}\label{theo_conv2}
Let $f\hspace{-1pt}\in\hspace{-1pt}R$, $\mathcal{X}\hspace{-1pt}\in\hspace{-1pt}R^{N\times N}$ and set $\tilde{\mathcal{M}}\hspace{-1pt}=\hspace{-1pt}f\mathbf{I}_{N}-\mathcal{X}\mathcal{M}$.
Let $\mathrm{m}\bigl(x\bigr)\hspace{-1pt}\in\hspace{-1pt}R\left[x\right]$ be a non trivial annihilating polynomial for $\tilde{\mathcal{M}}$, let
\begin{equation}
\mathrm{p}\bigl(x,\tilde{\mathcal{M}};\mathrm{m}\bigr)\mathcal{X}=\left(\begin{array}{cc}\mathcal{R}_{A}\left(x\right)&\mathcal{R}_{B}\left(x\right)\\\mathcal{R}_{C}\left(x\right)&\mathcal{R}_{D}\left(x\right)\end{array}\right)
\in R^{\left(n+s\right)\times\left(n+s\right)}
\end{equation}
be partitioned conformally to $\mathcal{M}$ and abbreviate
$\mathcal{K}=\mathcal{R}_{D}\left(f\right)\in R^{s\times s}$.
Then 
\begin{equation*}
\left(\mathrm{m}\left(f\right)\right)^s\det\left(\mathcal{A}\right)=\det\left(\mathcal{M}\right)\det\left(\mathcal{K}\right).
\end{equation*}
\end{theorem}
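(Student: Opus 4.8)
The plan is to manufacture, out of the adjugate-like matrix of $\mathcal{X}\mathcal{M}$, a left near-inverse $\mathcal{R}$ of $\mathcal{M}$ whose lower right block is exactly $\mathcal{K}$, and then to finish with Lemma~\ref{lem_Schur}, equation~\eqref{lem_Schur2}. First I would apply Corollary~\ref{cor_adj} to the matrix $\mathcal{X}\mathcal{M}\in R^{N\times N}$ (in the role of ``$\mathcal{X}$'' there, with ``$n$''${}=N$ and ``$\mathrm{a}$''${}=\mathrm{m}$): by hypothesis $\mathrm{m}(f\mathbf{I}_N-\mathcal{X}\mathcal{M})=\mathrm{m}(\tilde{\mathcal{M}})=\mathbf{0}$, so the corollary yields
\[
\mathrm{p}\bigl(f\mathbf{I}_N,\tilde{\mathcal{M}};\mathrm{m}\bigr)\,\mathcal{X}\mathcal{M}=\mathrm{m}(f)\mathbf{I}_N .
\]
Writing $m=\mathrm{m}(f)$ and $\mathcal{R}=\mathrm{p}(f\mathbf{I}_N,\tilde{\mathcal{M}};\mathrm{m})\mathcal{X}$, i.e. the matrix $\mathrm{p}(x,\tilde{\mathcal{M}};\mathrm{m})\mathcal{X}$ of the theorem specialised at $x=f$, this says $\mathcal{R}\mathcal{M}=m\mathbf{I}_N$; and since specialising $x=f$ commutes with extracting a block, $\mathcal{R}$ is partitioned conformally to $\mathcal{M}$ with blocks $\mathcal{R}_A(f),\mathcal{R}_B(f),\mathcal{R}_C(f)$ and lower right block $\mathcal{R}_D(f)=\mathcal{K}$.

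Now I would read off the claim from $\mathcal{R}\mathcal{M}=m\mathbf{I}_N$. This is precisely the defining relation ``$\mathcal{R}\mathcal{M}=m\mathbf{I}_{n+s}$'' of Definition~\ref{def_Schur} (a companion $\mathcal{P}$ with $\mathcal{P}\mathcal{A}=\mathcal{A}\mathcal{P}=a\mathbf{I}_n$ always exists, e.g. the classical adjugate $\mathcal{P}=\mathcal{A}^{\mathrm{adj}}$, $a=\det(\mathcal{A})$, by the relation recalled after~\eqref{eq_Adj=EV}), so $\mathcal{K}=\mathcal{R}_D(f)$ is an opponent of $\mathcal{A}$ in $\mathcal{M}$ and~\eqref{lem_Schur2} gives $\det(\mathcal{K})\det(\mathcal{M})=\det(\mathcal{A})\,m^s$, which is the assertion. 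To sidestep the ``not all zero'' clause — relevant only in the degenerate case $m=0$, $\mathcal{R}=\mathbf{0}$, where the identity is trivial for $s\ge1$ — one may instead reproduce directly the short argument behind~\eqref{lem_Schur2}: the bottom block-row of $\mathcal{R}\mathcal{M}=m\mathbf{I}_N$ reads $\mathcal{R}_C(f)\mathcal{A}+\mathcal{K}\mathcal{C}=\mathbf{0}$ and $\mathcal{R}_C(f)\mathcal{B}+\mathcal{K}\mathcal{D}=m\mathbf{I}_s$, hence
\[
\begin{pmatrix}\mathbf{I}_n&\mathbf{0}\\\mathcal{R}_C(f)&\mathcal{K}\end{pmatrix}\mathcal{M}=\begin{pmatrix}\mathcal{A}&\mathcal{B}\\\mathbf{0}&m\mathbf{I}_s\end{pmatrix},
\]
and taking determinants of both sides (using that a block-triangular determinant is the product of the diagonal block determinants) yields $\det(\mathcal{K})\det(\mathcal{M})=\det(\mathcal{A})\,m^s$.

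The only care required is bookkeeping: verifying that the conformal partition of $\mathrm{p}(x,\tilde{\mathcal{M}};\mathrm{m})\mathcal{X}$ at $x=f$ matches Definition~\ref{def_Schur} so that the $\mathcal{R}_D$ occurring there is literally our $\mathcal{K}$, and noticing that — in contrast to Theorem~\ref{theo_main2}, which used the two-sided relation $\mathcal{P}\mathcal{A}=\mathcal{A}\mathcal{P}=a\mathbf{I}_n$ and hence needed regularity or commutativity — here only the one-sided near-inverse $\mathcal{R}\mathcal{M}=m\mathbf{I}_N$ and the purely polynomial identity~\eqref{lem_Schur2} enter, so no hypotheses on $\mathcal{X}$, $\mathcal{A}$ or $\mathcal{M}$ are imposed. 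I do not expect a genuine obstacle beyond this straightforward reduction.
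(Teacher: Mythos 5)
Your proposal is correct and follows essentially the same route as the paper: apply Corollary~\ref{cor_adj} to $\mathcal{X}\mathcal{M}$ to get $\mathrm{p}\bigl(f,\tilde{\mathcal{M}};\mathrm{m}\bigr)\mathcal{X}\mathcal{M}=\mathrm{m}\left(f\right)\mathbf{I}_N$ and then invoke \eqref{lem_Schur2} of Lemma~\ref{lem_Schur} with $\mathcal{R}_D=\mathcal{K}$. Your extra remarks (supplying $\mathcal{P}=\mathcal{A}^{\mathrm{adj}}$ or re-deriving the block-triangular determinant identity directly) are just a more careful spelling-out of the same argument, not a different approach.
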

\begin{proof}
Since $\mathrm{p}\bigl(f,\tilde{\mathcal{M}};\mathrm{m}\bigr)\mathcal{X}\mathcal{M}=\mathrm{m}\left(f\right)\mathbf{I}_N$ by corollary~\ref{cor_adj}, lemma~\ref{lem_Schur} applies.
\end{proof}
\section{Isospectral Size Reduction}\label{sec_ISR}
Let $F$ be the field of meromorphic functions on the complex domain $D$ and let $\mathcal{M}=\bigl(\begin{smallmatrix}\mathcal{A}&\mathcal{B}\\\mathcal{C}&\mathcal{D}\end{smallmatrix}\bigr)\in F^{\left(n+s\right)\times\left(n+s\right)}$.
Let $\mathcal{X},\mathcal{Y}\in F^{n\times n}$ be regular, $f\in F$ and let $\mathrm{a}\left(x\right)\in F\left[x\right]\setminus\left\{0\right\}$ be a polynomial s.t. 
$\mathrm{a}\left(f\mathbf{I}_n-\mathcal{X}\mathcal{A}\mathcal{Y}\right)\equiv0$.
Let $P\hspace{-1pt}\subset\hspace{-1pt}D$ be the set of isolated poles at which $f$, an entry of $\mathcal{M}$, $\mathcal{X}$ or $\mathcal{Y}$, or a coefficient of $\mathrm{a}\left(x\right)$ is not defined. Denote $D\hspace{-1pt}\setminus\hspace{-1pt}P$ by $D_P$. We call $\lambda\in D_P$ and non vanishing $\mathbf{u}=\bigl(\hspace{-1pt}\begin{smallmatrix}\mathbf{v}\\\mathbf{w}\end{smallmatrix}\hspace{-1pt}\bigr)\hspace{-1pt}\in\hspace{-1pt}\mathbb{C}^{\left(n+s\right)}$ s.t.
\begin{equation}\label{eq_0=Mu}
\mathbf{0}=\mathcal{M}\left(\lambda\right)\mathbf{u}=\left(\begin{array}{cc}\mathcal{A}\left(\lambda\right)&\mathcal{B}\left(\lambda\right)\\\mathcal{C}\left(\lambda\right)&\mathcal{D}\left(\lambda\right)\end{array}\right)
\left(\begin{array}{c}\mathbf{v}\\\mathbf{w}\end{array}\right)
,\text{ i.e. }0=\det\left(\mathcal{M}\left(\lambda\right)\right),
\end{equation}
\emph{latent root} and \emph{latent vector} of $\mathcal{M}$, respectively. 
We will show that the latent root problem in \eqref{eq_0=Mu} can basically be reduced to the latent root problem of a Schur-like complement in $F^{s\times s}$, and provide a relation for latent vectors, too.
Additionally, considering spectral estimates based on the theorems of Geshgorin and Brauer~\cite{BUNIMOVICH20121429}, we show that the estimates corresponding to the Schur-like complement improve those of the original problem.
In light of those results this reduction sheme generalizes the so called \emph{isospectral graph reduction} proposed in~\cite{BUNIMOVICH20121429}, which arises with the additional conditions
\begin{itemize}
\item[(i)] $\mathcal{A}$ is triangular (up to simultaneous row and column permutation),
\item[(ii)] $\mathcal{X}$ and $\mathcal{X}$ are diagonal with
$\mathcal{X}_{ii}\mathcal{Y}_{ii}=\mathcal{A}_{ii}^{-1}$,
\item[(iii)] $f\equiv1$ and $\mathrm{a}\left(x\right)=x^{n}$.
\end{itemize}
\paragraph{Size Reduction}
We set $\mathcal{P}\hspace{-2pt}=\hspace{-2pt}\mathcal{Y}\mathrm{p}\hspace{-1pt}\left(f,f\mathbf{I}\hspace{-2pt}-\hspace{-2pt}\mathcal{X}\mathcal{A}\mathcal{Y};\mathrm{a}\right)\hspace{-1pt}\mathcal{X}$, using $\mathrm{p}\hspace{-1pt}\left(x,y;\mathrm{a}\right)$ given in definition~\ref{def_adj},
and consider the Schur-like complement
\begin{equation}\label{eq_isoS}
\mathcal{S}\left(\lambda\right)=\left(a\left(f\right)\right)\left(\lambda\right)\mathcal{D}\left(\lambda\right)-\mathcal{C}\left(\lambda\right)\mathcal{P}\left(\lambda\right)\mathcal{B}\left(\lambda\right).
\end{equation}
By theorem~\ref{theo_main2} we have
$\left(\mathrm{a}\left(f\right)\right)^{s}\det\left(\mathcal{M}\right)=\det\left(\mathcal{A}\right)\det\left(\mathcal{S}\right)$,
and from corollary~\ref{cor_adj} follows $\mathcal{P}\mathcal{A}=\left(\mathrm{a}\left(f\right)\right)\mathbf{I}_{n}$, i.e. $
\det\left(\mathcal{P}\right)\det\left(\mathcal{A}\right)=\left(\mathrm{a}\left(f\right)\right)^{n}$, as shown in the proof of theorem~\ref{theo_main2}. This establishes the following corollary
\begin{corollary}
\begin{itemize}
\item[(i)] Every latent root of $\mathcal{A}$ is a root of $\mathrm{a}\left(f\right)$.
\item[(ii)] Every latent root of $\mathcal{M}$ is a latent root of $\mathcal{S}$ or $\mathcal{A}$.
\item[(iii)] A latent root of $\mathcal{S}$ which is not a root of $\mathrm{a}\left(f\right)$ is a latent root of $\mathcal{M}$. 
\end{itemize}
\end{corollary}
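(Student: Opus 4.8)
The plan is to derive each of the three items directly from the two determinant identities already established just before the corollary, namely $\left(\mathrm{a}\left(f\right)\right)^{s}\det\left(\mathcal{M}\right)=\det\left(\mathcal{A}\right)\det\left(\mathcal{S}\right)$ from theorem~\ref{theo_main2} and $\det\left(\mathcal{P}\right)\det\left(\mathcal{A}\right)=\left(\mathrm{a}\left(f\right)\right)^{n}$ from corollary~\ref{cor_adj}. All three statements are about vanishing of meromorphic functions at a point $\lambda\in D_P$, and since $F$ is a field (so $F[x]$ has no zero divisors and the determinant identities are genuine equalities in $F$), I can simply evaluate at $\lambda$ and chase which factor must vanish. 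The one subtlety to keep flagged is that these are identities in the field $F$ of meromorphic functions, but the corollary speaks of \emph{latent roots}, i.e. points $\lambda\in D_P$; so before evaluating I should note that all functions involved ($\det\mathcal{M}$, $\det\mathcal{A}$, $\det\mathcal{S}$, $\det\mathcal{P}$, $\mathrm{a}(f)$) are holomorphic on $D_P$ by construction of $P$, hence the identities hold pointwise on $D_P$.

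For (i): if $\lambda$ is a latent root of $\mathcal{A}$, then $\det(\mathcal{A}(\lambda))=0$; plugging $\lambda$ into $\det\left(\mathcal{P}\right)\det\left(\mathcal{A}\right)=\left(\mathrm{a}\left(f\right)\right)^{n}$ gives $\left(\mathrm{a}(f)(\lambda)\right)^{n}=0$, so $\mathrm{a}(f)(\lambda)=0$ since $\mathbb{C}$ is a field. For (ii): if $\det(\mathcal{M}(\lambda))=0$, evaluate $\left(\mathrm{a}\left(f\right)\right)^{s}\det\left(\mathcal{M}\right)=\det\left(\mathcal{A}\right)\det\left(\mathcal{S}\right)$ at $\lambda$ to get $\det(\mathcal{A}(\lambda))\det(\mathcal{S}(\lambda))=0$, hence $\det(\mathcal{A}(\lambda))=0$ or $\det(\mathcal{S}(\lambda))=0$, i.e.\ $\lambda$ is a latent root of $\mathcal{A}$ or of $\mathcal{S}$. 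For (iii): suppose $\det(\mathcal{S}(\lambda))=0$ but $\mathrm{a}(f)(\lambda)\neq0$. From the same identity, $\left(\mathrm{a}(f)(\lambda)\right)^{s}\det(\mathcal{M}(\lambda))=\det(\mathcal{A}(\lambda))\cdot0=0$, and since $\left(\mathrm{a}(f)(\lambda)\right)^{s}\neq0$ we conclude $\det(\mathcal{M}(\lambda))=0$, so $\lambda$ is a latent root of $\mathcal{M}$.

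I do not anticipate a genuine obstacle here — each part is a one-line field computation once the determinant identities are in hand. The only thing requiring a sentence of care is the passage from ``equality of meromorphic functions'' to ``equality of complex numbers at $\lambda$'', which is legitimate precisely because $\lambda\in D_P$ guarantees every function in sight is defined (holomorphic) at $\lambda$; I would state this once at the start of the proof and then the three items follow immediately. If one wanted to be scrupulous one could also remark that ``latent root of $\mathcal{S}$'' is meaningful here because $\mathcal{S}$, like $\mathcal{M}$ and $\mathcal{A}$, is a matrix over $F$ with entries holomorphic on $D_P$ (the poles of $\mathcal{P}$ are already absorbed into $P$), so $\det(\mathcal{S}(\lambda))$ makes sense for every $\lambda\in D_P$.
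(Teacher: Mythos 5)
Your proof is correct and follows essentially the same route as the paper: the paper derives exactly the two identities $\left(\mathrm{a}\left(f\right)\right)^{s}\det\left(\mathcal{M}\right)=\det\left(\mathcal{A}\right)\det\left(\mathcal{S}\right)$ and $\det\left(\mathcal{P}\right)\det\left(\mathcal{A}\right)=\left(\mathrm{a}\left(f\right)\right)^{n}$ immediately before the corollary and states that they establish it, leaving the pointwise evaluation on $D_P$ implicit. Your explicit remark that all functions involved are holomorphic on $D_P$ (so the identities in $F$ can be evaluated at any $\lambda\in D_P$) is exactly the small piece of care the paper omits, and the three one-line deductions match the intended argument.
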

By lemma~\ref{lem_ker} we have the following relation for latent vectors
\begin{corollary}If $\left(\lambda_0,\mathbf{w}_0\right)$ is a latent pair of $\mathcal{S}$, i.e. $\mathbf{0}=\mathcal{S}\left(\lambda_0\right)\mathbf{w}_0$, s.t. $\lambda_0$ is not a root of $a\left(f\right)$,
then $\left(\lambda_0,\left(\begin{array}{c}-\mathcal{P}\hspace{-1pt}\left(\lambda_0\right)\mathcal{B}\left(\lambda_0\right)\\
\mathrm{a}\left(f\right)\left(\lambda_0\right)\end{array}\right)\mathbf{w}_0\right)
$ is a latent pair of $\mathcal{M}$.
\end{corollary}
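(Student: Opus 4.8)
The plan is to obtain this as the $\mathbf{v}_{0}=0$ instance of lemma~\ref{lem_ker}, transported through evaluation at $\lambda_{0}$. The single point that needs care is that one may \emph{not} simply re-run the argument of theorem~\ref{theo_main2} over $R=\mathbb{C}$ on the matrices already evaluated at $\lambda_{0}$: the regularity of $\mathcal{X}$ and $\mathcal{Y}$ is available in the field $F$ but can fail at an isolated point $\lambda_{0}$, so the key matrix identity must be set up over $F$ and only then specialized.

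Concretely, I would first record, over $F$, the meromorphic matrix identities $\mathcal{A}\mathcal{P}=\left(\mathrm{a}\left(f\right)\right)\mathbf{I}_{n}$ (this is exactly what corollary~\ref{cor_adj} yields in the proof of theorem~\ref{theo_main2}, using that $\mathcal{X},\mathcal{Y}$ are regular in $F$) and $\mathcal{S}=\left(\mathrm{a}\left(f\right)\right)\mathcal{D}-\mathcal{C}\mathcal{P}\mathcal{B}$, where $\mathcal{P}=\mathcal{Y}\mathrm{p}\left(f,f\mathbf{I}_{n}-\mathcal{X}\mathcal{A}\mathcal{Y};\mathrm{a}\right)\mathcal{X}$ involves no division. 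Since $\lambda_{0}\in D_{P}$, every entry of $\mathcal{M}$ and of $\mathcal{P}$, as well as $\mathrm{a}\left(f\right)$, is holomorphic at $\lambda_{0}$, so these identities may be evaluated at $\lambda_{0}$, giving $\mathcal{A}\left(\lambda_{0}\right)\mathcal{P}\left(\lambda_{0}\right)=\left(\mathrm{a}\left(f\right)\left(\lambda_{0}\right)\right)\mathbf{I}_{n}$ and $\mathcal{S}\left(\lambda_{0}\right)=\left(\mathrm{a}\left(f\right)\left(\lambda_{0}\right)\right)\mathcal{D}\left(\lambda_{0}\right)-\mathcal{C}\left(\lambda_{0}\right)\mathcal{P}\left(\lambda_{0}\right)\mathcal{B}\left(\lambda_{0}\right)$ as complex matrices.

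Next I would set $\mathbf{u}_{0}=\bigl(\begin{smallmatrix}-\mathcal{P}\left(\lambda_{0}\right)\mathcal{B}\left(\lambda_{0}\right)\\\mathrm{a}\left(f\right)\left(\lambda_{0}\right)\mathbf{I}_{s}\end{smallmatrix}\bigr)\mathbf{w}_{0}$ and compute $\mathcal{M}\left(\lambda_{0}\right)\mathbf{u}_{0}$ blockwise exactly as in the proof of lemma~\ref{lem_ker}: the upper block becomes $\left(\mathrm{a}\left(f\right)\left(\lambda_{0}\right)\mathbf{I}_{n}-\mathcal{A}\left(\lambda_{0}\right)\mathcal{P}\left(\lambda_{0}\right)\right)\mathcal{B}\left(\lambda_{0}\right)\mathbf{w}_{0}=\mathbf{0}$ by the first identity, and the lower block becomes $\mathcal{S}\left(\lambda_{0}\right)\mathbf{w}_{0}=\mathbf{0}$ by the second identity together with the hypothesis $\mathbf{0}=\mathcal{S}\left(\lambda_{0}\right)\mathbf{w}_{0}$. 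Hence $\mathbf{u}_{0}\in\ker\left(\mathcal{M}\left(\lambda_{0}\right)\right)$, i.e.\ $\mathbf{0}=\mathcal{M}\left(\lambda_{0}\right)\mathbf{u}_{0}$.

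It remains to check that $\mathbf{u}_{0}$ is a genuine, non vanishing latent vector. Its lower block equals $\left(\mathrm{a}\left(f\right)\left(\lambda_{0}\right)\right)\mathbf{w}_{0}$, where $\mathrm{a}\left(f\right)\left(\lambda_{0}\right)\neq0$ precisely because $\lambda_{0}$ is assumed not to be a root of $\mathrm{a}\left(f\right)$, and $\mathbf{w}_{0}\neq\mathbf{0}$ since it is a latent vector of $\mathcal{S}$; hence $\mathbf{u}_{0}\neq\mathbf{0}$, and $\left(\lambda_{0},\mathbf{u}_{0}\right)$ is a latent pair of $\mathcal{M}$, which is the assertion. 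The only real obstacle, already flagged, is the evaluation-versus-regularity issue: once $\mathcal{A}\mathcal{P}=\left(\mathrm{a}\left(f\right)\right)\mathbf{I}_{n}$ is obtained over $F$ and then restricted to $\lambda_{0}$, the remaining steps are the routine block computation underlying lemma~\ref{lem_ker} and the elementary non vanishing check.
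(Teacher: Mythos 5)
Your proposal is correct and follows essentially the same route as the paper, which simply invokes lemma~\ref{lem_ker} with $\mathbf{v}_{0}=\mathbf{0}$ after having established $\mathcal{P}\mathcal{A}=\mathcal{A}\mathcal{P}=\mathrm{a}\left(f\right)\mathbf{I}_{n}$ over $F$ (via corollary~\ref{cor_adj} and the regularity of $\mathcal{X},\mathcal{Y}$ in $F$) and then evaluating at $\lambda_{0}\in D_{P}$. Your explicit attention to the evaluation-versus-pointwise-regularity issue and to the non vanishing of the constructed vector (the lower block $\mathrm{a}\left(f\right)\left(\lambda_{0}\right)\mathbf{w}_{0}\neq\mathbf{0}$) only makes precise what the paper leaves implicit.
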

\paragraph{Improved Spectral Bounds}
It will be shown that $\mathcal{S}\left(\lambda\right)$ in \eqref{eq_isoS} admits an improvement of Geshgorin and Brauer like spectral approximations, which  generalizes corresponding results in~\cite[corollaries 2 and 3]{BUNIMOVICH20121429}.
We consider the theorems of Geshgorin~\cite{Gersch31} and Brauer~\cite{Brauer47} in the following form~\cite{BUNIMOVICH20121429}.
\begin{definition}
Let $\mathcal{X}$ be a square matrix over F. Define the (row based) \emph{Gershgorin region} and \emph{Brauer region} of $\mathcal{X}$, respectively, as
$$G\left(\mathcal{X}\right)=\bigcup_i
\left\{\lambda:\left|\mathcal{X}_{ii}\left(\lambda\right)\right|\leq\textstyle{\text{$\sum_{j\neq i}$}}\left|\mathcal{X}_{ij}\left(\lambda\right)\right|\right\}\qquad\text{and}$$
$$K\left(\mathcal{X}\right)=\bigcup_{i<k}
\left\{\lambda:\lvert\mathcal{X}_{ii}\left(\lambda\right)\mathcal{X}_{kk}\left(\lambda\right)\rvert\leq
\textstyle{\text{$\sum_{j\neq i}$}}\lvert\mathcal{X}_{ij}\left(\lambda\right)\rvert
\textstyle{\text{$\sum_{l\neq k}$}}\lvert\mathcal{X}_{kl}\left(\lambda\right)\rvert
\right\}$$
\end{definition}
\begin{lemma}\label{lem_LXsubKXsubGX}
$K\left(\mathcal{X}\right)\subseteq G\left(\mathcal{X}\right)$, and $0=\det\left(\mathcal{X}\left(\lambda_0\right)\right)$ implies
$\lambda_0\in K\left(\mathcal{X}\right)$.
\end{lemma}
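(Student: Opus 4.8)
The plan is to prove the two assertions of Lemma~\ref{lem_LXsubKXsubGX} separately, both by elementary manipulation of the defining inequalities. For the inclusion $K\left(\mathcal{X}\right)\subseteq G\left(\mathcal{X}\right)$, I would argue by contraposition: fix $\lambda_0\notin G\left(\mathcal{X}\right)$, so that for \emph{every} index $i$ one has the strict inequality $\lvert\mathcal{X}_{ii}\left(\lambda_0\right)\rvert>\sum_{j\neq i}\lvert\mathcal{X}_{ij}\left(\lambda_0\right)\rvert$. Multiplying the inequalities for two distinct indices $i<k$ (both sides being nonnegative reals, and the left sides being strictly positive) yields $\lvert\mathcal{X}_{ii}\left(\lambda_0\right)\mathcal{X}_{kk}\left(\lambda_0\right)\rvert>\bigl(\sum_{j\neq i}\lvert\mathcal{X}_{ij}\left(\lambda_0\right)\rvert\bigr)\bigl(\sum_{l\neq k}\lvert\mathcal{X}_{kl}\left(\lambda_0\right)\rvert\bigr)$, so $\lambda_0$ lies in none of the Brauer sets, i.e. $\lambda_0\notin K\left(\mathcal{X}\right)$.

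For the second assertion, I would prove the contrapositive: if $\lambda_0\notin K\left(\mathcal{X}\right)$, then $\mathcal{X}\left(\lambda_0\right)$ is invertible, hence $\det\left(\mathcal{X}\left(\lambda_0\right)\right)\neq0$. The cleanest route is to invoke the strict double diagonal dominance machinery already developed in Section~\ref{sec_AMSC}. Working over $F=\mathbb{C}$ (or rather over the evaluated entries), $\lambda_0\notin K\left(\mathcal{X}\right)$ says precisely that the complex matrix $\mathcal{X}\left(\lambda_0\right)$ satisfies $\lvert\mathcal{X}_{ii}\left(\lambda_0\right)\mathcal{X}_{kk}\left(\lambda_0\right)\rvert>\bigl(\sum_{j\neq i}\lvert\mathcal{X}_{ij}\left(\lambda_0\right)\rvert\bigr)\bigl(\sum_{l\neq k}\lvert\mathcal{X}_{kl}\left(\lambda_0\right)\rvert\bigr)$ for all $i\neq k$, i.e. that $\mathcal{X}\left(\lambda_0\right)$ is sdd in the sense of Definition~\ref{def_sd}. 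Then Lemma~\ref{lem_sdd}(ii), applied with (say) $n=s=1$ split trivially or more directly with $\mathcal{P}=\mathbf{I}$, $a=1$ and an arbitrary $1{+}\left(N-1\right)$ block partition, gives that $\mathcal{X}\left(\lambda_0\right)$ is regular over $\mathbb{C}$, which over a field means $\det\left(\mathcal{X}\left(\lambda_0\right)\right)\neq0$. Equivalently, one can give the classical direct Brauer-type argument: if $\mathcal{X}\left(\lambda_0\right)\mathbf{u}=0$ for some $\mathbf{u}\neq0$, pick the two indices $i,k$ where $\lvert u_i\rvert$ and $\lvert u_k\rvert$ are the two largest absolute values; the $i$-th and $k$-th equations give $\lvert\mathcal{X}_{ii}\left(\lambda_0\right)\rvert\lvert u_i\rvert\leq\lvert u_k\rvert\sum_{j\neq i}\lvert\mathcal{X}_{ij}\left(\lambda_0\right)\rvert$ and symmetrically, and multiplying these contradicts $\lambda_0\notin K\left(\mathcal{X}\right)$ (the degenerate case $u_k=0$, forcing $\mathbf{u}$ supported on one coordinate, is handled separately and also yields a contradiction).

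I do not anticipate a genuine obstacle here; both parts are standard facts about diagonal dominance transcribed to the field of meromorphic functions, where all that matters is that at the fixed point $\lambda_0\in D_P$ everything evaluates to honest complex numbers and $\lvert\cdot\rvert$ is the ordinary modulus. The one point requiring a little care is the degenerate tie-breaking in the second largest coordinate when running the direct argument, which is why routing through Lemma~\ref{lem_sdd} is attractive: that lemma already absorbs such bookkeeping. If I use Lemma~\ref{lem_sdd} I should note explicitly that a single point evaluation turns $F$-entries into $\mathbb{C}$-entries so the hypotheses of that lemma (a commutative ring with an absolute value, here $\mathbb{C}$) are met. I expect the whole proof to be three or four lines once the reduction to sdd matrices is spelled out.
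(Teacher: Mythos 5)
Your proof is correct and follows essentially the same route as the paper: the inclusion $K\left(\mathcal{X}\right)\subseteq G\left(\mathcal{X}\right)$ is the contrapositive of proposition~\ref{pro_sdd}(i) (sd implies sdd), which you simply re-derive by multiplying the Gershgorin inequalities, and the second claim is exactly the paper's appeal to lemma~\ref{lem_sdd}(ii) (sdd implies regular) applied to the complex matrix $\mathcal{X}\left(\lambda_0\right)$. One small correction: the parenthetical instantiation $\mathcal{P}=\mathbf{I}$, $a=1$ does not satisfy $\mathcal{P}\mathcal{A}=a\mathbf{I}$ unless $\mathcal{A}=\mathbf{I}$; for a $1+\left(N-1\right)$ partition take $\mathcal{P}=1$ and $a=\mathcal{X}_{11}\left(\lambda_0\right)$ (nonzero since the matrix is sdd), which is all lemma~\ref{lem_sdd} requires.
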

\begin{proof}
Let $\lambda\in K\left(\mathcal{X}\right)$. Since $\mathcal{X}\left(\lambda\right)$ is not sdd, it is not sd, by proposition~\ref{pro_sdd}(i). Hence $\lambda\in G\left(\mathcal{X}\right)$.
If $\lambda_0$ is a latent root of $\mathcal{X}$, then $\mathcal{X}\left(\lambda_0\right)$ is not regular, hence by lemma~\ref{lem_sdd}, not sdd. Thus, $\lambda_0\in K\left(\mathcal{X}\right)$.
\end{proof}
We now consider the latent root problem in~\eqref{eq_0=Mu} together with~\eqref{eq_isoS}.
\begin{theorem}Let $\rho\hspace{-1pt}\left(f'\right)$ denote the set of all distinct roots of $f'\in\mathcal{F}$ in $D_P$.
\begin{itemize}
\item[(i)] $\rho\left(\det\left(\mathcal{M}\right)\right)\hspace{2pt}\subseteq\hspace{2pt}G\hspace{-1pt}\left(\mathcal{S}\right)\hspace{-1pt}\cup\hspace{-1pt}\rho\left(\det\left(\mathcal{A}\right)\right)\hspace{2pt}\subseteq\hspace{2pt}G\hspace{-1pt}\left(\mathcal{M}\right)\hspace{-1pt}\cup\hspace{-1pt}\rho\left(\mathrm{a}\left(f\right)\right)$ and 
\item[(ii)] $\rho\left(\det\left(\mathcal{M}\right)\right)\hspace{2pt}\subseteq\hspace{2pt}K\hspace{-1pt}\left(\mathcal{S}\right)\hspace{-1pt}\cup\hspace{-1pt}\rho\left(\det\left(\mathcal{A}\right)\right)\hspace{2pt}\subseteq\hspace{2pt}K\hspace{-1pt}\left(\mathcal{M}\right)\hspace{-1pt}\cup\hspace{-1pt}\rho\left(\mathrm{a}\left(f\right)\right)$.
\end{itemize}
\end{theorem}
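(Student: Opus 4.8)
The plan is to split each of the two chains into an \emph{inner} inclusion (everything on the left lies in the $\mathcal{S}$-region together with $\rho(\det(\mathcal{A}))$) and an \emph{outer} inclusion ($\mathcal{S}$-region together with $\rho(\det(\mathcal{A}))$ lands inside the $\mathcal{M}$-region together with $\rho(\mathrm{a}(f))$), treating the Gershgorin and Brauer versions in parallel since they differ only by replacing ``sd'' with ``sdd''. Throughout I use that evaluation at a point $\lambda_0\in D_P$ is a ring homomorphism $F\to\mathbb{C}$ and commutes with forming determinants, and that $\mathbb{C}$ equipped with the absolute value is a ring with multiplicative norm, so that Lemma~\ref{lem_sdd} and Lemma~\ref{lem_LXsubKXsubGX} are applicable to the complex matrices obtained by evaluation.

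For the inner inclusion I would take $\lambda_0\in\rho(\det(\mathcal{M}))$ with $\lambda_0\notin\rho(\det(\mathcal{A}))$ and evaluate the identity $(\mathrm{a}(f))^s\det(\mathcal{M})=\det(\mathcal{A})\det(\mathcal{S})$ of Theorem~\ref{theo_main2} at $\lambda_0$; since $\lambda_0\in D_P$ all entries involved are defined, and $\det(\mathcal{A}(\lambda_0))\neq0$ in the field $F$ forces $\det(\mathcal{S}(\lambda_0))=0$, whence $\lambda_0\in K(\mathcal{S})\subseteq G(\mathcal{S})$ by Lemma~\ref{lem_LXsubKXsubGX}. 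This one computation yields simultaneously $\rho(\det(\mathcal{M}))\subseteq G(\mathcal{S})\cup\rho(\det(\mathcal{A}))$ and $\rho(\det(\mathcal{M}))\subseteq K(\mathcal{S})\cup\rho(\det(\mathcal{A}))$.

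For the outer inclusion I would first dispose of $\rho(\det(\mathcal{A}))\subseteq\rho(\mathrm{a}(f))$ by evaluating the relation $\det(\mathcal{P})\det(\mathcal{A})=(\mathrm{a}(f))^n$ derived above from Corollary~\ref{cor_adj} at a root of $\det(\mathcal{A})$. The remaining pieces, $G(\mathcal{S})\subseteq G(\mathcal{M})\cup\rho(\mathrm{a}(f))$ and its Brauer analogue, I would prove by contraposition: if $\mathrm{a}(f)(\lambda_0)\neq0$ and $\lambda_0\notin G(\mathcal{M})$ (resp.\ $\lambda_0\notin K(\mathcal{M})$), then the complex matrix $\mathcal{M}(\lambda_0)$ is sd (resp.\ sdd); evaluating $\mathcal{P}\mathcal{A}=(\mathrm{a}(f))\mathbf{I}_n$ gives $\mathcal{P}(\lambda_0)\mathcal{A}(\lambda_0)=\mathrm{a}(f)(\lambda_0)\mathbf{I}_n\neq\mathbf{0}$, so $\mathcal{P}(\lambda_0)$ is not all zero, and Lemma~\ref{lem_sdd}(i) applied over $\mathbb{C}$ shows that the Schur-like complement $\mathcal{S}(\lambda_0)=\mathrm{a}(f)(\lambda_0)\mathcal{D}(\lambda_0)-\mathcal{C}(\lambda_0)\mathcal{P}(\lambda_0)\mathcal{B}(\lambda_0)$ is sd (resp.\ sdd), hence $\lambda_0\notin G(\mathcal{S})$ (resp.\ $\lambda_0\notin K(\mathcal{S})$). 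Concatenating the inner inclusion, the inclusion $\rho(\det(\mathcal{A}))\subseteq\rho(\mathrm{a}(f))$, and the outer inclusion then gives (i) and (ii).

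I do not expect a genuine obstacle; the one point requiring care is the last step, where Lemma~\ref{lem_sdd} must be invoked not over $F$ but over the coefficient field $\mathbb{C}$ after evaluation at $\lambda_0$, which means checking both that $\mathcal{P}(\lambda_0)$ is nonzero (guaranteed precisely by $\mathrm{a}(f)(\lambda_0)\neq0$) and that the evaluated matrix $\mathcal{S}(\lambda_0)$ is literally a Schur-like complement of $\mathcal{A}(\lambda_0)$ in $\mathcal{M}(\lambda_0)$ in the sense of Definition~\ref{def_Schur}. Everything else is routine bookkeeping of the four inclusions and the observation that all quantities in sight are defined and behave functorially under evaluation at $\lambda_0\in D_P$.
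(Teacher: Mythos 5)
Your proof is correct and follows essentially the same route as the paper: the first inclusion via the determinant identity of Theorem~\ref{theo_main2} together with Lemma~\ref{lem_LXsubKXsubGX}, and the second by contraposition, evaluating at $\lambda_0\notin G(\mathcal{M})\cup\rho(\mathrm{a}(f))$ (resp.\ $K(\mathcal{M})\cup\rho(\mathrm{a}(f))$) and applying Lemma~\ref{lem_sdd} to conclude $\mathcal{S}(\lambda_0)$ is sd (sdd). The only difference is that you spell out the piece $\rho(\det(\mathcal{A}))\subseteq\rho(\mathrm{a}(f))$ via $\det(\mathcal{P})\det(\mathcal{A})=(\mathrm{a}(f))^{n}$ and the pointwise applicability of Lemma~\ref{lem_sdd} over $\mathbb{C}$, which the paper leaves to its preceding corollary and to the reader.
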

\begin{proof}
$\rho\left(\det\left(\mathcal{M}\right)\right)\subseteq \rho\left(\det\left(\mathcal{S}\right)\right)\cup\rho\left(\det\left(\mathcal{A}\right)\right)$ and $\rho\left(\det\left(\mathcal{S}\right)\right)\subseteq K\left(\mathcal{S}\right)\subseteq G\left(\mathcal{S}\right)$ by theorem~\ref{theo_main2} and lemma~\ref{lem_LXsubKXsubGX}. 
To show the second inclusions of (i) and (ii), we consider $\lambda\notin G\left(\mathcal{M}\right)\cup\rho\left(\mathrm{a}\left(f\right)\right)$ ($\lambda\notin K\left(\mathcal{M}\right)\cup\rho\left(\mathrm{a}\left(f\right)\right)$), which implies that $\mathcal{M}\left(\lambda\right)$ is sd (sdd). Since $\mathrm{a}\left(f\right)\left(\lambda\right)\neq0$, 
$\mathcal{S}\left(\lambda\right)$ is sd (sdd) by lemma~\ref{lem_sdd}.
\end{proof}
\section{Application to Ordinary Eigenvalue Problems}\label{sec_AOEP}
In this section let $R=\mathbb{C}$. We discuss the application of theorems \ref{theo_main2} and \ref{theo_conv2} to the size reduction of ordinary eigenvalue problems. We recall the definition of an eigenvalue of a complex matrix $\mathbf{X}$ as a root of $\det\left(\lambda-\mathbf{X}\right)$. The multiset of roots of a function $f\left(\lambda\right)$ is denoted $\varrho\left(f\right)$. The multiset $\varrho^s$ arises by multiplying the multipicity of each distinct element in $\varrho$ by $s$. The eigenvalue multiset of $\mathbf{X}$, its spectrum, is denoted by $\sigma\left(\mathbf{X}\right)=\varrho\left(\det\left(\lambda-\mathbf{X}\right)\right)$.

Since our reduction method gives a monic polynomial eigenvalue problem (PEP), we briefly recall how a PEP can be transformed into an ordinary eigenvalue problem.
\paragraph{Linearization of Polynomial Eigenvalue Problems}
We are interested in finding $\lambda\in\mathbb{C}$ and $\mathbf{0}\neq\mathbf{v}\in\mathbb{C}^N$ s.t.
\begin{equation}\label{eq_poly_eig}
\mathcal{L}\left(\lambda\right)\mathbf{v}=\mathbf{0}\qquad\text{with}\qquad\mathcal{L}\left(\lambda\right)=\textstyle{\text{$\sum_{k=0}^{d}$}}\mathbf{L}_k\lambda^k\text{ , }\mathbf{L}_k\in\mathbb{C}^{N\times N},
\end{equation}
in particular with the further restriction
$\mathbf{L}_d=\mathbf{I}_N$ (monoticity). The matrix $\mathcal{L}$ is also known as a lambda-matrix and the roots of $\det\left(\mathcal{L}\right)$ are called the latent roots of $\mathcal{L}$ but the term eigenvalues is rather common.

There are algorithms that work directly on this problem (cf., for instance, \cite{R73AlgNon} or \cite{V02ArnMet}). Another perhaps more widely known way to solve PEPs is via so-called \emph{linearizations}, which transform the PEP into a generalized eigenproblem $\mathbf{Z}_0\mathbf{w}=\lambda\mathbf{Z}_1\mathbf{w}$ of size $dN$ that is equivalent to it. 
A thorough treatment of linearizations can be found e.g. in \cite{AV04NewFam}, \cite{HMT06ConLin}, \cite{HMMT06SymLin}, \cite{L08LinReg} or \cite{MMMM06VecSpa}. The classical approach utilizes block companion matrices, for instance in the following form
$$\mathbf{Z}_0=\left(\begin{array}{cccc}-\mathbf{L}_{d-1}&\cdots&-\mathbf{L}_{1}&-\mathbf{L}_{0}\\\mathbf{I}&\ &\mathbf{0}&\mathbf{0}\\\ &\ddots&\ &\vdots\\\mathbf{0}&\ &\mathbf{I}&\mathbf{0}\end{array}\right),\qquad\qquad\quad$$
\begin{equation}\label{eq_comp1}
\mathbf{Z}_1=\left(\begin{array}{cccc}\mathbf{L}_{d}&\ &\ &\mathbf{0}\\\ &\mathbf{I}&\ &\ \\\ &\ &\ddots&\ \\\mathbf{0}&\ &\ &\mathbf{I}\end{array}\right)\text{ and }
\mathbf{w}=\left(\begin{array}{c}\lambda^{d-1}\mathbf{v}\\\vdots\\\lambda\mathbf{v}\\\mathbf{v}\end{array}\right).
\end{equation}
In case of a monic PEP we obtain an ordinary eigenvalue problem.

It is a disadvantage of the companion form that it in general does not reflect exploitable properties of special PEPs like those with all coefficients being hermitian.
More general linearizations allow, for instance, transformations into block symmetric generalized eigenproblems, so that in the case of hermitian lambda-matrices the linearization is hermitian, too, although moniticity is in general not preserved. Algorithms for the linear (generalized) eigenproblem may be found for example in \cite{GL96MatCom}, \cite{MS73AlgGen} or \cite{S01MatAlgII}.
\subsection{Reduction over a Schur-like complement}\label{subsec_RedSC}
We consider the eigenproblem of a complex matrix $\mathbf{M}$ with principal submatrix $\mathbf{A}$, s.t. the spectrum and an annihilating polynomial for $\mathbf{A}$ are given. The following result follows from theorem \ref{theo_main2}.
\begin{corollary}\label{theo_main}
Let $\mathbf{A}\in\mathbb{C}^{n\times n}$ be annihilated by the monic complex polynomial $$\mathrm{a}\left(x\right)=\textstyle{\text{$\sum_{k=0}^{d}$}}a_kx^k\in\mathbb{C}\left[x\right],\qquad a_d=1,\qquad\mathrm{a}\left(\mathbf{A}\right)=\mathbf{0}$$ of degree $d>0$. Let $\mathbf{B}\in\mathbb{C}^{n\times s}$, $\mathbf{C}\in\mathbb{C}^{s\times n}$, $\mathbf{D}\in\mathbb{C}^{s\times s}$,
$\mathbf{M}=\left(\begin{smallmatrix}\mathbf{A}&\mathbf{B}\\\mathbf{C}&\mathbf{D}\end{smallmatrix}\right).$
and let
\begin{equation}\label{eq_assocS}
\mathcal{S}\left(\lambda\right)=\mathrm{a}\left(\lambda\right)\left(\lambda\mathbf{I}-\mathbf{D}\right)-\mathbf{C}\mathrm{p}\left(\lambda\mathbf{I},\mathbf{A};\mathrm{a}\right)\mathbf{B}
\end{equation}
be the associated Schur-like complement.
Then, $\mathcal{S}\left(\lambda\right)$ has a linearization as a standard eigenproblem and the spectra of $\mathbf{A}$ and $\mathbf{M}$ are related through $$\sigma\left(\mathbf{M}\right)=\sigma\left(\mathbf{A}\right)+\varrho\left(\det\left(\mathcal{S}\left(\lambda\right)\right)\right)-\varrho^s\left(\mathrm{a}\left(\lambda\right)\right).$$
\end{corollary}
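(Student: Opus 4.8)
The plan is to obtain the corollary as a direct specialization of Theorem~\ref{theo_main2} to the polynomial ring $R=\mathbb{C}[\lambda]$. I would apply that theorem with the ambient matrix taken to be the characteristic matrix $\lambda\mathbf{I}_{n+s}-\mathbf{M}$, whose blocks conformal to the partition are $\lambda\mathbf{I}_n-\mathbf{A}$, $-\mathbf{B}$, $-\mathbf{C}$ and $\lambda\mathbf{I}_s-\mathbf{D}$, and with the choices $\mathcal{X}=\mathcal{Y}=\mathbf{I}_n$ and $f=\lambda$. The hypotheses hold trivially: $\mathbf{I}_n$ has determinant $1$, a unit of $\mathbb{C}[\lambda]$, hence is regular, and in any case $\mathbf{I}_n$ commutes with every matrix; moreover $\tilde{\mathcal{A}}=f\mathbf{I}_n-\mathcal{X}(\lambda\mathbf{I}_n-\mathbf{A})\mathcal{Y}=\mathbf{A}$, so the given monic $\mathrm{a}(x)$, regarded as an element of $(\mathbb{C}[\lambda])[x]$, is a non trivial annihilating polynomial of $\tilde{\mathcal{A}}$ because $\mathrm{a}(\mathbf{A})=\mathbf{0}$ and $d>0$.

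With these substitutions the quantities produced by Theorem~\ref{theo_main2} are exactly those appearing in the corollary: $\mathcal{P}=\mathcal{Y}\,\mathrm{p}(\lambda\mathbf{I}_n,\tilde{\mathcal{A}};\mathrm{a})\,\mathcal{X}=\mathrm{p}(\lambda\mathbf{I}_n,\mathbf{A};\mathrm{a})$, and the Schur-like complement is $\mathrm{a}(f)\mathcal{D}-\mathcal{C}\mathcal{P}\mathcal{B}=\mathrm{a}(\lambda)(\lambda\mathbf{I}_s-\mathbf{D})-(-\mathbf{C})\,\mathrm{p}(\lambda\mathbf{I}_n,\mathbf{A};\mathrm{a})\,(-\mathbf{B})$, which is precisely $\mathcal{S}(\lambda)$ of~\eqref{eq_assocS}. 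Hence the conclusion of Theorem~\ref{theo_main2} is the polynomial identity $\bigl(\mathrm{a}(\lambda)\bigr)^s\det(\lambda\mathbf{I}_{n+s}-\mathbf{M})=\det(\lambda\mathbf{I}_n-\mathbf{A})\,\det(\mathcal{S}(\lambda))$ in $\mathbb{C}[\lambda]$.

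The rest is bookkeeping over $\mathbb{C}$. Since $\mathbb{C}[\lambda]$ factors into linear primes, $\varrho$ carries a product of polynomials to the multiset sum of the factors' root multisets, and $\varrho(p^{s})=\varrho^{s}(p)$ by the definition of $\varrho^{s}$. Applying $\varrho$ to the identity yields $\varrho^{s}(\mathrm{a}(\lambda))+\sigma(\mathbf{M})=\sigma(\mathbf{A})+\varrho(\det(\mathcal{S}(\lambda)))$; in particular the multiset $\varrho^{s}(\mathrm{a}(\lambda))$ is contained in the right-hand side, so the subtraction makes sense and rearranging gives $\sigma(\mathbf{M})=\sigma(\mathbf{A})+\varrho(\det(\mathcal{S}(\lambda)))-\varrho^{s}(\mathrm{a}(\lambda))$. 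For the linearization assertion I would verify moniticity of $\mathcal{S}(\lambda)$: as $\mathrm{a}$ is monic of degree $d$, the summand $\mathrm{a}(\lambda)(\lambda\mathbf{I}_s-\mathbf{D})$ equals $\lambda^{d+1}\mathbf{I}_s$ plus matrix terms of lower $\lambda$-degree, whereas $\mathrm{p}(\lambda\mathbf{I}_n,\mathbf{A};\mathrm{a})=\sum_{k=1}^{d}a_k\sum_{i=1}^{k}\lambda^{i-1}\mathbf{A}^{k-i}$ has $\lambda$-degree $d-1$, so $\mathbf{C}\,\mathrm{p}(\lambda\mathbf{I}_n,\mathbf{A};\mathrm{a})\,\mathbf{B}$ has $\lambda$-degree at most $d-1$; thus $\mathcal{S}(\lambda)$ is a monic lambda-matrix of degree $d+1$ over $\mathbb{C}^{s\times s}$, and the block companion form of~\eqref{eq_comp1} turns its latent root problem into a standard eigenproblem (of size $s(d+1)$).

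The only steps needing genuine care are the substitution, where the two sign conventions $-\mathbf{B}$ and $-\mathbf{C}$ in the characteristic matrix must be tracked so that the Schur-like complement comes out as $\mathcal{S}(\lambda)$ and not its sign-twisted cousin, and the degree count, since it is the normalization $a_d=1$ that makes the leading coefficient of $\mathcal{S}(\lambda)$ equal to $\mathbf{I}_s$ rather than merely invertible --- which is what lets the linearization be a standard rather than a generalized eigenvalue problem. Everything else is an appeal to Theorem~\ref{theo_main2} and to unique factorization in $\mathbb{C}[\lambda]$.
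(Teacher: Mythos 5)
Your proof is correct and follows exactly the route the paper intends: it specializes Theorem~\ref{theo_main2} to $R=\mathbb{C}[\lambda]$ with the ambient matrix $\lambda\mathbf{I}_{n+s}-\mathbf{M}$, $\mathcal{X}=\mathcal{Y}=\mathbf{I}_n$, $f=\lambda$ (so $\tilde{\mathcal{A}}=\mathbf{A}$), tracks the signs of $-\mathbf{B}$, $-\mathbf{C}$ to recover $\mathcal{S}(\lambda)$ of~\eqref{eq_assocS}, and passes to root multisets; the moniticity/degree count giving the size-$s(d+1)$ companion linearization matches the paper's~\eqref{eq_coeff_S}. No gaps.
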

The matrix coefficients $\mathbf{A}_k$ of the monic lambda-matrix $\mathcal{S}\left(\lambda\right)$ are given by
\begin{align}\label{eq_coeff_S}
\mathbf{A}_k&=a_{k-1}\mathbf{I}-a_k\mathbf{D}-\textstyle{\text{$\sum_{i=0}^{d-1-k}$}}a_{k+1+i}\mathbf{C}\mathbf{A}^{i}\mathbf{B},\qquad k=1,\dots,d,\nonumber\\
\mathbf{A}_{0}&=\phantom{a_{k-1}\mathbf{I}}-a_0\mathbf{D}-\ \textstyle{\text{$\sum_{i=0}^{d-1}$}}\ a_{i+1}\mathbf{C}\mathbf{A}^{i}\mathbf{B}\quad\text{and}\quad\mathbf{A}_{d+1}=a_d\mathbf{I}.
\end{align}
The next proposition follows from corollary~\ref{theo_main} with $\lambda_0\in\sigma\left(\mathbf{A}\right)\Rightarrow\lambda_0\in\varrho\left(\mathrm{a}\right)$.
\begin{proposition}\label{pro_mult}
Let $\lambda_0$ be an eigenvalue of $\mathbf{A}$ with multiplicity $m$. If $m<s$, then $\lambda_0$ is a latent root of $\mathcal{S}\left(\lambda\right)$ with multiplicity at least $s-m$.
\end{proposition}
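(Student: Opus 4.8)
The plan is to obtain the statement by pure bookkeeping of root multiplicities in the determinantal identity behind Corollary~\ref{theo_main}. Recall that instantiating Theorem~\ref{theo_main2} over $R=\mathbb{C}\left[\lambda\right]$ with $\mathcal{X}=\mathcal{Y}=\mathbf{I}_{n}$ (regular), $f=\lambda$, and $\lambda\mathbf{I}_{N}-\mathbf{M}$ in place of $\mathcal{M}$ gives $\tilde{\mathcal{A}}=\mathbf{A}$, makes $\mathcal{S}$ the matrix of~\eqref{eq_assocS}, and yields the polynomial identity
\begin{equation*}
\left(\mathrm{a}\left(\lambda\right)\right)^{s}\det\left(\lambda-\mathbf{M}\right)=\det\left(\lambda-\mathbf{A}\right)\det\left(\mathcal{S}\left(\lambda\right)\right)
\end{equation*}
in $\mathbb{C}\left[\lambda\right]$, i.e.\ the multiset relation $\sigma\left(\mathbf{M}\right)=\sigma\left(\mathbf{A}\right)+\varrho\left(\det\left(\mathcal{S}\left(\lambda\right)\right)\right)-\varrho^{s}\left(\mathrm{a}\left(\lambda\right)\right)$ of Corollary~\ref{theo_main}. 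Since both sides are genuine polynomials, it is enough to compare the order of vanishing of each side at $\lambda_{0}$.

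First I would record that $\lambda_{0}\in\sigma\left(\mathbf{A}\right)$ together with $\mathrm{a}\left(\mathbf{A}\right)=\mathbf{0}$ forces $\mathrm{a}\left(\lambda_{0}\right)=0$: an eigenvalue of $\mathbf{A}$ is a root of every annihilating polynomial of $\mathbf{A}$. Hence $\mathrm{a}\left(\lambda\right)$ vanishes at $\lambda_{0}$ to some order $\mu\geq1$, so $\left(\mathrm{a}\left(\lambda\right)\right)^{s}$ vanishes there to order $s\mu\geq s$; meanwhile $\det\left(\lambda-\mathbf{A}\right)$ vanishes at $\lambda_{0}$ to order exactly $m$, and $\det\left(\lambda-\mathbf{M}\right)$ vanishes to some order $p\geq0$. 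Letting $q$ denote the multiplicity of $\lambda_{0}$ as a latent root of $\mathcal{S}\left(\lambda\right)$, i.e.\ as a root of $\det\left(\mathcal{S}\left(\lambda\right)\right)$, and equating orders of vanishing on the two sides of the identity gives $s\mu+p=m+q$, whence $q=s\mu+p-m\geq s-m$.

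Finally, the hypothesis $m<s$ makes $s-m\geq1$, so $q\geq1$ and $\lambda_{0}$ is a latent root of $\mathcal{S}\left(\lambda\right)$ of multiplicity at least $s-m$, as claimed. I do not expect a genuine obstacle; the only two points meriting a word of care are that $q$ is a priori a nonnegative integer (because $\det\left(\mathcal{S}\left(\lambda\right)\right)$ is an actual element of $\mathbb{C}\left[\lambda\right]$, not merely a formal quotient of the identity above) and that one must use the strict bound $\mu\geq1$ rather than $\mu\geq0$---which is precisely the place where the assumption $\lambda_{0}\in\sigma\left(\mathbf{A}\right)$, rather than just $\mathrm{a}\left(\mathbf{A}\right)=\mathbf{0}$, enters.
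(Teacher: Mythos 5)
Your proof is correct and takes essentially the same route as the paper: the paper obtains the proposition directly from Corollary~\ref{theo_main} together with the observation $\lambda_0\in\sigma\left(\mathbf{A}\right)\Rightarrow\lambda_0\in\varrho\left(\mathrm{a}\right)$, which is precisely your multiplicity count carried out on the identity $\left(\mathrm{a}\left(\lambda\right)\right)^{s}\det\left(\lambda\mathbf{I}-\mathbf{M}\right)=\det\left(\lambda\mathbf{I}-\mathbf{A}\right)\det\left(\mathcal{S}\left(\lambda\right)\right)$ underlying that corollary. Your explicit bookkeeping of vanishing orders (and the remark that $\det\left(\mathcal{S}\left(\lambda\right)\right)$ is a genuine nonzero polynomial) simply spells out what the paper leaves implicit.
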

\paragraph{Computational costs}
Maintaining the notation of corollary~\ref{theo_main}, we define
\begin{definition}
$\kappa=\left(d+1\right)s$,
\end{definition}
which is the size of the linearized form (see above) of $\mathcal{S}\left(\lambda\right)$ . We will argue that corollary~\ref{theo_main} may offer practical numerical advantages when
\begin{equation}\label{kappalimit}
\kappa<\left(n+s\right).
\end{equation}
However, the lambda-matrix $\mathcal{S}\left(\lambda\right)$ may not inherit special properties of $\mathbf{M}$, e.g. sparseness. Therefore, \eqref{kappalimit} is more a guideline than a strict rule.
In this sense, for sake of simplicity, we assume that the practical computational cost for finding the eigenvalues of a complex matrix is roughly cubic in its size. A more rigorous analysis can be found, for instance, in~\cite{Pan99CME}.

Since, by linearization, the latent value problem of $\mathcal{S}\left(\lambda\right)$ can be transformed into an ordinary eigenvalue problem of size $\kappa$, solving it scales with $O\left(\kappa^3\right)$, which is, assuming \eqref{kappalimit}, less then $O\bigl(\left(n+s\right)^3\bigr)$, the cost for solving the eigenproblem of $\mathbf{M}$. Again assuming \eqref{kappalimit}, the construction of $\mathcal{S}\left(\lambda\right)$, i.e. the matrices $\mathbf{A}_k$, is dominated by the computation of the $\mathbf{C}\mathbf{A}^i\mathbf{B}$, $i\in\left\{0,\ldots,\left(d-1\right)\right\}$. If intermediate results are stored, the computation of those is dominated by the computation of $\mathbf{C}\mathbf{A}^{\left(d-1\right)}\mathbf{B}$. Without giving further details, we claim a simple upper bound of $O\left(\kappa n^2\right)$ for the cost of constructing $\mathcal{S}\left(\lambda\right)$, which is still less then $O\bigl(\left(n+s\right)^3\bigr)$. That cost may be further reduced if $\mathbf{A}$ is sparse or structured. Note that for small $\kappa$ the cost for constructing $\mathcal{S}\left(\lambda\right)$ is usually much larger than the cost for actually solving the corresponding latent root problem.
\paragraph{Improvements}\label{subsec_FI}
Although $\det\left(\mathcal{S}\left(\lambda\right)\right)=0$ can be transformed into an ordinary eigenvalue problem, it may be beneficial to solve the latent root problem of $\mathcal{S}\left(\lambda\right)$ without linearization using algorithms which exploit hermiticity or other structural properties (see above). In either case, it is usually an advantage to reduce $\kappa=\left(d+1\right)s$, which may be possible depending on the eigenstructure of $\mathbf{A}$. For instance, it is easy to see that $d$, the degree of the given annihilating polynomial $\mathrm{a}\left(x\right)$, can be reduced if there are roots which have higher multiplicity in the root multiset of $\mathrm{a}\left(x\right)$ than in the root multiset of the minimal polynomial, which is in a sense the optimal choice. A further example is the deflation of known eigenpairs of $\mathbf{A}$, which may allow for reducing $d$ at the expense of increasing $s$. In order to demonstrate the essential idea, we consider the case of a known eigenvector $\mathbf{v}_0$ to a simple eigenvalue $\lambda_0$.

Let $\tilde{n}=n-1$ and $\tilde{s}=s+1$. From $\mathbf{v}_0$ one can derive~\cite[pp.~11-12]{S01MatAlgII} an invertible matrix $\mathbf{Q}$ s.t.
\begin{equation}
\mathbf{Q}^{-1}\mathbf{A}\mathbf{Q}=\left(\begin{array}{cc}\mathbf{\tilde{A}}&\mathbf{y}\\\mathbf{0}&\lambda_0
\end{array}\right)\in\mathbb{C}^{\left(\tilde{n}+1\right)\times\left(\tilde{n}+1\right)}.
\end{equation}
An example for $\mathbf{Q}$ would be a Householder reflection mapping the $n$-th standard basis vector in the direction of $\mathbf{v}_0$. Since Householder reflections are rank-1-updates of the identity, they can be applied efficiently with complexity $O\left(n^2\right)$~\cite[pp.~81-82]{S01MatAlgII}.
Given $\mathrm{a}\left(x\right)$ of degree $d$ s.t. $\mathrm{a}\left(\mathbf{A}\right)=\mathbf{0}$, one derives $\tilde{\mathrm{a}}\left(x\right)\hspace{-1pt}=\hspace{-1pt}\mathrm{a}\hspace{-1pt}\left(x\right)\hspace{-1pt}/\hspace{-1pt}\left(x-\lambda_0\right)$ of degree $\tilde{d}\hspace{-1pt}=\hspace{-1pt}d-1$ s.t. $\tilde{\mathrm{a}}\bigl(\mathbf{\tilde{A}}\bigr)\hspace{-1pt}=\hspace{-1pt}\mathbf{0}$, using~\eqref{eq_(y-z)p=qy-qz}. We consider
$$
\mathbf{\tilde{M}}=\left(\begin{array}{cc}\mathbf{Q}^{-1}&\mathbf{0}\\\mathbf{0}&\mathbf{I}_s
\end{array}\right)\mathbf{M}\left(\begin{array}{cc}\mathbf{Q}&\mathbf{0}\\\mathbf{0}&\mathbf{I}_s
\end{array}\right)
=\left(\begin{array}{cc}
\mathbf{Q}^{-1}\mathbf{A}\mathbf{Q}&\mathbf{Q}^{-1}\mathbf{B}\\
\mathbf{C}\mathbf{Q}&\mathbf{D}\end{array}\right)\in\mathbb{C}^{\left(n+s\right)\times\left(n+s\right)},
$$
which may be repartitioned, without changing row and column ordering, as
\begin{equation}
\mathbf{\tilde{M}}=
\scalebox{0.7}{$
\left(\begin{array}{ccc|c|ccc}
\multicolumn{3}{c}{\multirow{3}{*}{\raisebox{0pt}{\scalebox{1.4}{$\mathbf{\tilde{A}}$}
}}}\vline&&\mathtt{x}&\cdots&\mathtt{x}\\
&&&\raisebox{0pt}{$\mathbf{y}$}&\raisebox{0pt}{$\vdots$}&\raisebox{0pt}{$\ddots$}&\raisebox{0pt}{$\vdots$}\\
&&&&\mathtt{x}&\cdots&\mathtt{x}\\\hline
0&\cdots&0&\lambda_0&\mathtt{x}&\cdots&\mathtt{x}\\\hline
\mathtt{x}&\cdots&\mathtt{x}&\mathtt{x}&\multicolumn{3}{c}{\multirow{3}{*}{\raisebox{0pt}{\scalebox{1.4}{$\mathbf{D}$}
}}}\\
\raisebox{0pt}{$\vdots$}&\raisebox{0pt}{$\ddots$}&\raisebox{0pt}{$\vdots$}&\raisebox{0pt}{$\vdots$}&&\\
\mathtt{x}&\cdots&\mathtt{x}&\mathtt{x}&&&
\end{array}\right)$}=
\left(\begin{array}{ccc}
\mathbf{\tilde{A}}&\mathbf{\tilde{B}}\\
\mathbf{\tilde{C}}&\mathbf{\tilde{D}}
\end{array}\right)
\in\mathbb{C}^{\left(\tilde{n}+\tilde{s}\right)\times\left(\tilde{n}+\tilde{s}\right)}.
\end{equation}
Summarizing, we have $\sigma\bigl(\mathbf{\tilde{M}}\bigr)=\sigma\left(\mathbf{M}\right)$, $\tilde{d}=d-1$ and $\tilde{s}=s+1$, hence 
\begin{equation}
\tilde{\kappa}=\tilde{s}\bigl(\tilde{d}+1\bigr)=\kappa+d-s.
\end{equation}
Considering $\kappa$ and $\tilde{\kappa}$ as a measure for the effort needed to solve the latent value problem of the respective associated Schur-like complements, the described transformation and repartition from $\mathbf{M}$ to $\mathbf{\tilde{M}}$ is beneficial for $d<s$.
\subsection{Reduction over an Opponent}\label{sec_RedOpp}
The next corollary  allows to exploit the spectrum and an annihilating polynomial of a complex matrix in order to determine the spectrum of a principal submatrix. It is complementary to corollary~\ref{theo_main} and follows from theorem~\ref{theo_conv2}.
\begin{corollary}\label{theo_conv}
Let $\mathbf{M}=\left(\begin{smallmatrix}\mathbf{A}&\mathbf{B}\\\mathbf{C}&\mathbf{D}\end{smallmatrix}\right)$ be a $2\times 2$ block matrix of size $\left(n+s\right)$ with $\mathbf{A}\in\mathbb{C}^{n\times n}$, $\mathbf{B}\in\mathbb{C}^{n\times s}$, $\mathbf{C}\in\mathbb{C}^{s\times n}$ and $\mathbf{D}\in\mathbb{C}^{s\times s}$. Let $\operatorname{m}\left(x;\mathbf{M}\right)=\sum_{k=0}^{d}m_kx^k$
be an annihilating polynomial for $\mathbf{M}$ of degree $d>0$, and let
\begin{equation}\label{eq_K}
\textstyle{\text{$\sum_{k=1}^{d}$}}m_k\textstyle{\text{$\sum_{i=1}^{k}$}}\lambda^{i-1}\mathbf{M}^{k-i}=\mathrm{p}\left(\lambda\mathbf{I},\mathbf{M};\mathrm{m}\right)=\left(\begin{array}{cc}\mathcal{P}_{11}\left(\lambda\right)&\mathcal{P}_{12}\left(\lambda\right)\\\mathcal{P}_{21}\left(\lambda\right)&\mathcal{K}\left(\lambda\right)\end{array}\right)
\end{equation}
be partitioned conformable to $\mathcal{M}$ s.t. $\mathcal{K}\left(\lambda\right)\in\left(\mathbb{C}\left[\lambda\right]\right)^{s\times s}$ is its lower diagonal block. Then $\sigma\left(\mathbf{A}\right)=\sigma\left(\mathbf{M}\right)
-\varrho^s\left(\operatorname{m}\left(\lambda;\mathbf{M}\right)\right)
+\varrho\left(\mathcal{K}\left(\lambda\right)\right).$
\end{corollary}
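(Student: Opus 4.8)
The plan is to deduce the identity directly from Theorem~\ref{theo_conv2} over the polynomial ring $R=\mathbb{C}[\lambda]$, and then to read off the spectra as root multisets of the resulting polynomial identity.

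First I would apply Theorem~\ref{theo_conv2} with $R=\mathbb{C}[\lambda]$, with the role of its matrix $\mathcal{M}$ played by $\lambda\mathbf{I}_{n+s}-\mathbf{M}$ (so that its leading $n\times n$ block is $\lambda\mathbf{I}_n-\mathbf{A}$), and with $f=\lambda$ and $\mathcal{X}=\mathbf{I}_{n+s}$, as in the general setting sketched in the introduction. Then $\tilde{\mathcal{M}}=f\mathbf{I}_{n+s}-\mathcal{X}(\lambda\mathbf{I}_{n+s}-\mathbf{M})=\mathbf{M}$, a constant matrix over $\mathbb{C}[\lambda]$, so the requirement that $\mathrm{m}(x)\in R[x]$ be a non-trivial annihilating polynomial for $\tilde{\mathcal{M}}$ is met by $\mathrm{m}(x;\mathbf{M})=\sum_{k=0}^{d}m_k x^k$ with $m_k\in\mathbb{C}\subset\mathbb{C}[\lambda]$: it is non-trivial since $d>0$, and it annihilates $\mathbf{M}=\tilde{\mathcal{M}}$ by hypothesis. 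Because $\mathcal{X}=\mathbf{I}$, we have $\mathrm{p}(x,\tilde{\mathcal{M}};\mathrm{m})\mathcal{X}=\mathrm{p}(x,\mathbf{M};\mathrm{m})$, whose value at $x=f=\lambda$ is precisely the matrix on the left of~\eqref{eq_K}; hence its lower right block $\mathcal{R}_D(f)$ is exactly $\mathcal{K}(\lambda)$. Theorem~\ref{theo_conv2} therefore gives the identity
\begin{equation*}
\bigl(\mathrm{m}(\lambda;\mathbf{M})\bigr)^{s}\det\left(\lambda\mathbf{I}_n-\mathbf{A}\right)=\det\left(\lambda\mathbf{I}_{n+s}-\mathbf{M}\right)\det\left(\mathcal{K}(\lambda)\right)
\end{equation*}
in $\mathbb{C}[\lambda]$, where $\mathrm{m}(\lambda;\mathbf{M})=\sum_{k=0}^{d}m_k\lambda^{k}$ is the image of $\mathrm{m}(x;\mathbf{M})$ under $x\mapsto\lambda$.

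Next I would pass from this polynomial identity to multisets of roots. Both sides are nonzero polynomials (the factors $\det(\lambda\mathbf{I}_n-\mathbf{A})$ and $\det(\lambda\mathbf{I}_{n+s}-\mathbf{M})$ are monic, and $\mathrm{m}(\lambda;\mathbf{M})$ has degree $d>0$), and over the algebraically closed field $\mathbb{C}$ the root multiset of a product is the multiset sum of the factors' root multisets. Using $\sigma(\mathbf{X})=\varrho(\det(\lambda-\mathbf{X}))$, reading $\varrho(\mathcal{K}(\lambda))$ as the latent-root multiset $\varrho(\det\mathcal{K}(\lambda))$, and the fact that raising a polynomial to the $s$-th power multiplies every root multiplicity by $s$ (so that $\varrho$ of $\bigl(\mathrm{m}(\lambda;\mathbf{M})\bigr)^{s}$ equals $\varrho^{s}(\mathrm{m}(\lambda;\mathbf{M}))$), the identity becomes the multiset equation
\begin{equation*}
\varrho^{s}\left(\mathrm{m}(\lambda;\mathbf{M})\right)+\sigma(\mathbf{A})=\sigma(\mathbf{M})+\varrho\left(\mathcal{K}(\lambda)\right).
\end{equation*}
Since the right-hand side equals $\varrho^{s}(\mathrm{m}(\lambda;\mathbf{M}))+\sigma(\mathbf{A})$, it contains $\varrho^{s}(\mathrm{m}(\lambda;\mathbf{M}))$, so the multiset difference is well defined, and rearranging yields $\sigma(\mathbf{A})=\sigma(\mathbf{M})-\varrho^{s}(\mathrm{m}(\lambda;\mathbf{M}))+\varrho(\mathcal{K}(\lambda))$, as claimed.

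I expect no serious obstacle; the work is entirely in the bookkeeping. The points to get right are: checking that the shift-and-scale $\tilde{\mathcal{M}}$ collapses to the constant matrix $\mathbf{M}$, so that a scalar annihilating polynomial of $\mathbf{M}$ is a legitimate non-trivial input to Theorem~\ref{theo_conv2}; matching the block $\mathcal{R}_D(f)$ of that theorem with the block $\mathcal{K}(\lambda)$ of~\eqref{eq_K}; and verifying that the degrees on the two sides of the polynomial identity agree (both equal $n+sd$, with $\det\mathcal{K}(\lambda)$ contributing degree $s(d-1)$ because $\mathcal{K}(\lambda)=m_d\lambda^{d-1}\mathbf{I}_s+(\text{lower order in }\lambda)$), which makes the passage to multisets --- and in particular the well-definedness of the subtraction --- unproblematic.
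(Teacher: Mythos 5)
Your proposal is correct and follows exactly the route the paper intends: the paper proves the corollary simply by invoking Theorem~\ref{theo_conv2} with $R=\mathbb{C}[\lambda]$, $f=\lambda$, $\mathcal{X}=\mathbf{I}$ and the theorem's matrix taken to be $\lambda\mathbf{I}-\mathbf{M}$ (so $\tilde{\mathcal{M}}=\mathbf{M}$ and $\mathcal{R}_D(f)=\mathcal{K}(\lambda)$), then reads off the root multisets, which is precisely your argument. Your extra care about the well-definedness of the multiset subtraction and the degree count is a welcome refinement of the bookkeeping the paper leaves implicit.
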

With
$\mathbf{B}_0\hspace{-2pt}=\hspace{-2pt}\left(\hspace{-3pt}\begin{array}{c}\mathbf{B}\\\mathbf{D}\end{array}\hspace{-3pt}\right)$ and $\mathbf{C}_0\hspace{-2pt}=\hspace{-2pt}\left(\hspace{-5pt}\begin{array}{cc}\mathbf{C}\hspace{-5pt}&\hspace{-5pt}\mathbf{D}\end{array}\hspace{-5pt}\right)$,
the coefficients of $\mathcal{K}\left(\lambda\right)\hspace{-2pt}=\hspace{-2pt}
\sum\limits_{k=0}^{d-1}
\mathbf{K}_k\lambda^k$ are
\begin{align}\label{eq_coeff_K}
\mathbf{K}_k\phantom{_{-1}}&=m_{k+1}\mathbf{I}+m_{k+2}\mathbf{D}+\mathbf{C}_0\left(\textstyle{\text{$\sum_{i=0}^{d-3-k}m_{k+3+i}$}}\mathbf{M}^i\right)\mathbf{B}_0,\qquad k=0,\ldots,d-2,\nonumber\\
\mathbf{K}_{d-1}&=\phantom{_{+1}}m_d\mathbf{I}.
\end{align}
\section{Vertex Perturbation of Strongly Regular Graphs}\label{sec_VPSRG}
A strongly regular graph is characterized by five parameters, $\left(n,k,\mu,\alpha,t\right)$, which determine its spectrum. $n$ is its size, the graph theoretic role of the other parameters is irrelevant, here.
For more details see~\cite{DUVAL88SRDG}. Note that, in order to avoid conflict with our notation, one parameter was relabeled.

Let $G$ be a $\left(n,k,\mu,\alpha,t\right)$-graph. Let $\mathbf{A}$ be its adjacency matrix and let $\mathbf{A}$ obey the following relations~\cite{DUVAL88SRDG}, in which $\mathbf{J}$ is the all-ones-matrix,
\begin{equation}\label{eq_SRG}
\mathbf{A}^2+\left(\mu-\alpha\right)\mathbf{A}-\left(t-\mu\right)\mathbf{I}=\mu\mathbf{J}\qquad\text{and}\qquad\mathbf{A}\mathbf{J}=\mathbf{J}\mathbf{A}=k\mathbf{J},
\end{equation}
\begin{equation}
\text{i.e.}\quad\mathbf{A}^3+\left(\mu-\alpha-k\right)\mathbf{A}^2+\left(\mu-t+k\left(\alpha-\mu\right)\right)\mathbf{A}+k\left(t-\mu\right)\mathbf{I}=\mathbf{0}.
\end{equation}
We use the annihilating polynomial
\begin{equation}
\mathrm{a}\left(x\right)=a_3x^3+a_2x^2+a_1x+a_0
\qquad\text{with}
\end{equation}
\begin{equation}
a_3=1,\quad a_2=\mu-\alpha-k,\quad a_1=\mu-t+k\left(\alpha-\mu\right),\quad a_0=k\left(t-\mu\right).
\end{equation}
\subsection{Adding Vertices to a Strongly Regular Graph}\label{subsec_AddSRG}
Let
$\mathbf{M}=\left(\begin{smallmatrix}\mathbf{A}&\mathbf{B}\\\mathbf{C}&\mathbf{D}\end{smallmatrix}\right)\in\left\{0,1\right\}^{\left(n+s\right)\times\left(n+s\right)}\subset\mathbb{C}^{\left(n+s\right)\times\left(n+s\right)}
$ be the adjacency matrix of the graph $H$, obtained by adding $s$ vertices to $G$ and connecting them arbitrarily to the initial vertices and to each other. Using \eqref{eq_assocS} and \eqref{eq_coeff_S}, we obtain the associated Schur-like complement as 
$\mathrm{S}\left(\lambda\right)=\mathbf{A}_4\lambda^4+\mathbf{A}_3\lambda^3+\mathbf{A}_2\lambda^2+\mathbf{A}_1\lambda+\mathbf{A}_0$
with
\begin{align}
\mathbf{A}_4&=a_3\mathbf{I},\nonumber\\
\mathbf{A}_3&=a_2\mathbf{I}-a_3\mathbf{D},\nonumber\\
\mathbf{A}_2&=a_1\mathbf{I}-a_2\mathbf{D}-a_3\mathbf{C}\mathbf{B},\nonumber\\
\mathbf{A}_1&=a_0\mathbf{I}-a_1\mathbf{D}-a_2\mathbf{C}\mathbf{B}-a_3\mathbf{C}\mathbf{A}\mathbf{B},\nonumber\\
\mathbf{A}_0&=\phantom{a_0\mathbf{I}}-a_0\mathbf{D}-a_1\mathbf{C}\mathbf{B}-a_2\mathbf{C}\mathbf{A}\mathbf{B}-a_3\mathbf{C}\mathbf{A}^2\mathbf{B}.
\end{align}
Assuming $s\ll n$, the computation of the coefficients $\mathbf{A}_k$ requires $s$ matrix-vector-multiplications, respectively, in order to obtain $\mathbf{A}\mathbf{B}$ and $\mathbf{A}^2\mathbf{B}=\mathbf{A}\left(\mathbf{A}\mathbf{B}\right)$, and further $3s^2$ vector-vector-multiplications for $\mathbf{C}\mathbf{B}$, $\mathbf{C}\left(\mathbf{A}\mathbf{B}\right)$, and $\mathbf{C}\left(\mathbf{A}^2\mathbf{B}\right)$. All other computations are bounded by $O\left(s^2\right)$. Therefore, the cost for constructing $\mathcal{S}\left(\lambda\right)$ is bounded by $O\left(sn^2\right)$. However, if $G$ (or its complement) is sparse, this might be further reduced significantly.
In order to find the latent roots of $\det\left(\mathcal{S}\left(\lambda\right)\right)=0$, one may exploit that it is equivalent to the ordinary (by monoticity) eigenproblem
\begin{equation}
\det\left(\left(\begin{array}{cccc}\mathbf{A}_{3}&\mathbf{A}_{2}&\mathbf{A}_1&\mathbf{A}_0\\\mathbf{I}&\mathbf{0}&\mathbf{0}&\mathbf{0}\\\mathbf{0}&\mathbf{I}&\mathbf{0}&\mathbf{0}\\\mathbf{0}&\mathbf{0}&\mathbf{I}&\mathbf{0}\end{array}\right)-\lambda\left(\begin{array}{cccc}\mathbf{I}&\mathbf{0}&\mathbf{0}&\mathbf{0}\\\mathbf{0}&\mathbf{I}&\mathbf{0}&\mathbf{0}\\\mathbf{0}&\mathbf{0}&\mathbf{I}&\mathbf{0}\\\mathbf{0}&\mathbf{0}&\mathbf{0}&\mathbf{I}\end{array}\right)\right)=0
\end{equation}
of size $\kappa=4s$ (independent of $n$). Since $k$ is always a simple eigenvalue of $\mathbf{A}$ with constant eigenvector,~\cite{DUVAL88SRDG} or \eqref{eq_SRG}, we already know, by proposition~\ref{pro_mult}, that, for s>1, $k$ must be a latent root of $S\left(\lambda\right)$ with multiplicity at least $s-1$. Furthermore, according to subsection~\ref{subsec_FI}, by deflation of the eigenvalue $k$ in $\mathbf{A}$, we may efficiently transform and repartition $\mathbf{M}$ into 
\begin{equation}
\mathbf{\tilde{M}}=\mathbf{\tilde{Q}}^{-1}\mathbf{M}\mathbf{\tilde{Q}}=\left(\begin{array}{cc}\mathbf{\tilde{A}}&\mathbf{\tilde{B}}\\\mathbf{\tilde{C}}&\mathbf{\tilde{D}}\end{array}\right)\in\mathbb{C}^{\left(\left(n-1\right)+\left(s+1\right)\right)\times\left(\left(n-1\right)+\left(s+1\right)\right)}
\end{equation}
s.t. $\mathbf{\tilde{A}}\hspace{-1pt}\in\hspace{-1pt}\mathbb{C}^{\left(n-1\right)\times\left(n-1\right)}$ is annihilated by $\tilde{\mathrm{a}}\left(x\right)\hspace{-2pt}=\hspace{-2pt}x^2+\left(\mu-\alpha\right)x+\left(\mu-t\right)$. The associated Schur-like complement of $\mathbf{\tilde{A}}$ admits a linearization of size $\tilde{\kappa}\hspace{-1pt}=\hspace{-1pt}3\hspace{-1pt}\left(s+1\right)$.
\subsection{Removing Vertices from a Strongly Regular Graph}\label{subsec_RemSRG}
Employing the block partition $\mathbf{A}=\left(\begin{smallmatrix}\mathbf{R}&\mathbf{S}\\\mathbf{T}&\mathbf{U}\end{smallmatrix}\right)\in\left\{0,1\right\}^{\left(\tilde{n}+s\right)\times\left(\tilde{n}+s\right)}\subset\mathbb{C}^{\left(\tilde{n}+s\right)\times\left(\tilde{n}+s\right)}$, the principal submatrix $\mathbf{R}$ is the adjacency matrix of the graph $H'$ of size $\tilde{n}=n-s$, obtained by removing the last $s$ vertices of $G$.
According to subsection~\ref{sec_RedOpp}, the associated opponent $\mathcal{K}\hspace{-1pt}\left(\lambda\right)\hspace{-2pt}=\hspace{-2pt}\mathbf{K}_2\lambda^2+\mathbf{K}_1\lambda+\mathbf{K}_0$, which is the lower right block of
\begin{equation}\label{eq_Ksrg}
\textstyle{\text{$\sum_{k=1}^{3}$}}a_k\textstyle{\text{$\sum_{i=1}^{k}$}}\lambda^{i-1}\mathbf{A}^{k-i}=\mathrm{p}\left(\lambda\mathbf{I},\mathbf{A};\mathrm{a}\right)=\left(\begin{array}{cc}\mathcal{P}_{R}\left(\lambda\right)&\mathcal{P}_{S}\left(\lambda\right)\\\mathcal{P}_{T}\left(\lambda\right)&\mathcal{K}\left(\lambda\right)\end{array}\right),
\end{equation}
is given, using~\eqref{eq_coeff_K} and abbreviating $\mathbf{S}_0=\left(\begin{array}{c}\mathbf{S}\\\mathbf{U}\end{array}\right)$ and $\mathbf{T}_0=\left(\begin{array}{cc}\mathbf{T}&\mathbf{U}\end{array}\right)$, by
\vspace{-10pt}
\begin{align}
\mathbf{K}_2&=a_3\mathbf{I},\nonumber\\
\mathbf{K}_1&=a_2\mathbf{I}+a_3\mathbf{U},\nonumber\\
\mathbf{K}_0&=a_1\mathbf{I}+a_2\mathbf{U}+a_3\mathbf{T}_0\mathbf{S}_0.
\end{align}
In case of $s\ll n$, the cost for constructing $\mathcal{K}\left(\lambda\right)$ is dominated by the $O\left(s^2n\right)$ cost for the product $\mathbf{S}_0\mathbf{T}_0$.
The latent values of $\mathcal{K}\left(\lambda\right)$ are the eigenvalues of
\begin{equation}
\det\left(\left(\begin{array}{ccc}\mathbf{K}_{1}&\mathbf{K}_{0}\\\mathbf{I}&\mathbf{0}\end{array}\right)-\lambda\left(\begin{array}{ccc}\mathbf{I}&\mathbf{0}\\\mathbf{0}&\mathbf{I}\end{array}\right)\right)=0,
\end{equation}
an ordinary eigenproblem of size $2s$. For undirected strongly regular graphs $G$ and $s=1$, we have $\mathbf{U}=0$ and $\mathbf{T}_0\mathbf{S}_0=k$ independent of the vertex ordering, which implies the well known fact that all vertex deleted subgraphs of an undirected strongly regular graph have the same spectrum.
\bibliography{RelEigAdjSch}
\end{document}